\documentclass[11pt]{amsart}   	
\usepackage{geometry}                		
\geometry{a4paper}                   		
\usepackage{graphicx}				
\usepackage{mathrsfs}  
\usepackage{hyperref}
\usepackage{xcolor}
\usepackage{tikz-cd}		
\usepackage{amsfonts,amsthm}
\usepackage{mathtools}									
\usepackage{array}					
\usepackage{amssymb}
\usepackage{amsmath}
\usepackage{enumitem}						
																
\newtheorem{theorem}{Theorem}[section]
\newtheorem{proposition}[theorem]{Proposition}

\newtheorem{lemma}[theorem]{Lemma}

\newtheorem{definition}[theorem]{Definition}
\newtheorem{example}[theorem]{Example}

%
\title[]{On Thompson knot theory and conjugacy classes of Thompson's group $F$}

\address{Division of Mathematics \&
Research Center for Pure and Applied Mathematics,
Graduate School of Information Sciences,
Tohoku University, 6-3-09 Aramaki-Aza-Aoba, Aoba-ku, Sendai 980-8579, Japan.}
\email{yybao@tohoku.ac.jp}
\address{%
The University of Osaka, International College, 
1-2 Machikaneyama-cho, Toyonaka, Osaka 560-0043}
\email{xs54standrews@gmail.com}

\author{Yuanyuan Bao and Xiaobing Sheng}

\subjclass[]{Primary 57K10, Secondary 20F65%
}
\keywords{%
	 Thompson's group $F,$ Thompson knot theory, conjugacy class}
%

\usetikzlibrary{decorations.markings}

\tikzset{->-/.style={decoration={
  markings,
  mark=at position .5 with {\arrow{>}}},postaction={decorate}}}

\tikzset{-<-/.style={decoration={
  markings,
  mark=at position .5 with {\arrow{<}}},postaction={decorate}}}

\begin{document}
\maketitle
%
%
%
\begin{abstract} 
Jones introduced a method to produce unoriented links 
from elements of the Thompson's group $F$, 
and proved that any link can be produced by this construction. 
In this paper, we attempt to 
investigate the relations between conjugacy classes of the group $F$ 
and the links being constructed. For each unoriented link $L$, 
we find a sequence of elements of $F$ from distinct conjugacy classes 
which yield $L$ via Jones's construction. 
We also show that a sequence of $2$-bridge links can be constructed 
from elements in the conjugacy class of $x_0$ (resp. $x_1$).
\end{abstract}

\maketitle

\tableofcontents

\section{Introduction}

Thompson's groups were first constructed in 1960's by R. Thompson 
and were found to have many interesting properties 
and in connection 
with not only pure mathematics, 
but also other scientific fields
 including quantum field theory and cryptography.
They can be considered as 
subgroups of the group of homeomorphisms of the Cantor set 
whose algebraic properties and topological properties 
are being much investigated \cite{MR1426438}.

V. Jones inspired by the results on unitary representations of Thompson's groups 
\cite{key3589908m}, 
tried to build up the connection between his subfactor theory and Thompson's groups. 
He also found a way to construct knots and links from Thompson's group $F$ concretely, 
as an analogue of constructing knots and links from the braid groups \cite{key3986040m}. 
He then proved that all links can be constructed from Thompson's group $F$  
(an analogue of the Alexander theorem in braid group).  
Aiello proved the theorem for the oriented Thompson's group $\vec{F}$
which appears as a subgroup of $F$ \cite{Aiello:2020aa}.
Over the years, knot theoretic aspects of the construction have been studied such as in
\cite{Aiello:2022aa, Aiello:2022ad, doi:10.1142/S0218216523400230} 
and Aiello has summarised the theory as Thompson knot theory.

For braid groups, 
the Markov theorem states that two braids produce the same link if and only if 
they are connected by conjugating operations and (de)stabilizations. 
As a corollary, 
we see that braids belonging to the same conjugacy class produce the same link. 
The converse is obviously not true. 
One link can be produced from braids from different conjugacy classes.

%
One may want to know 
if we have a version of Markov theorem for Thompson's group $F$. %
Namely, for two elements in $F$ that produce the same link, 
can one figure out the operations connecting these two elements? 
In this paper, to provide more references for solving this problem, 
we investigate the relation between conjugacy classes of the group $F$ 
and the knots and links being constructed. 
We found out that for each unoriented link $L$, 
one can find a sequence of elements of Thompson's group $F$ 
from distinct conjugacy classes 
which produce $L$ via Jones's construction (Theorem \ref{thm1}). 
Unlike in the braid group case, 
we also show that for $x_0$ (resp. $x_1$) in $F$, 
one can construct infinitely many different links from the conjugacy classes of 
$x_0$ (resp. $x_1$) (Theorem \ref{thm2}). 
Note that $x_0$ and $x_1$ generate $F$. 

From the above result, 
we can further consider the following questions. 
For a given link $L$ and a given conjugacy class of $F$, 
can one find an element in the given conjugacy class that produces $L$? 
Can one construct all links from the conjugacy classes of $F$, 
rather than from the elements of $F$? 
We hope to further work on these questions in the future.
%


%
\medskip
The organization of the paper is as follows.
We introduce Thompson's group $F,$ and Jones's construction of links 
in Sections \ref{sec2.1} and \ref{sec2.2}, 
and we provide Belk and Matucci's description of conjugacy classes of $F$ 
in Section \ref{sec2.3}.  
In Section \ref{sec3}, we prove Theorem \ref{thm1}. 
Finally, we prove Theorem \ref{thm2} in Section \ref{sec4}.

\section{Preliminary}
\subsection{Thompson's group $F$}
\label{sec2.1}
Thompson's group $F$ being one of the most mysterious groups 
in geometric group theory, 
can be defined from several different aspects. 
To fit with our later description of constructing knots and links 
from elements of Thompson's group $F$, 
we introduce the group $F$ 
from the viewpoint of tree diagrams.

%
Let $\mathcal{T}$ be the set of rooted finite binary trees. 
We take $T_{+},$ $T_{-} \in \mathcal{T}$ 
that are the trees with the same number of leaves to form a tree pair ($T_{+}, T_{-})$ 
(see Fig. \ref{fig1} (left)),
where we call $T_{+}$ the source tree and $T_{-}$ the target tree. 

We take a tree pair $(T_{+}, T_{-})$ and flip the target tree $T_{-}$ vertically 
and we obtain $T_{-}^{flip}.$
Then we align the source tree $T_{+}$ 
and the flipped target tree $T_{-}^{flip}$ vertically as in Fig. \ref{fig1} (middle).
We then align all the leaves in each tree to a horizontal line 
and attach the leaves in the source tree to the corresponding leaves 
in the target tree to form 
the so-called tree diagram (Fig. \ref{fig1} (right)). 
Here we abuse the notation and still denote the tree diagram by $(T_+, T_-)$ 
\cite{Aiello:2020aa, key3589908m}. 
In a tree diagram, the source tree and target tree have common leaves.

\begin{figure}[htp]
\begin{center}
\begin{tikzpicture}[baseline=-0.65ex, thick, scale=0.3]
\draw (0, 4) to (-4, 0);
\draw (0, 4) to (4, 0);
\draw (3, 1) to (2, 0);
\draw (-3, 1) to (-2, 0);
\draw (-2, 2) to (0, 0);
\draw (0,-2) node{$T_+$};
\end{tikzpicture}
\begin{tikzpicture}[baseline=-0.65ex, thick, scale=0.3]
\draw (0, 4) to (-4, 0);
\draw (0, 4) to (4, 0);
\draw (1, 3) to (-2, 0);
\draw (0, 2) to (2, 0);
\draw (1, 1) to (0, 0);
\draw (0,-2) node{$T_-$};
\end{tikzpicture}\quad\quad
\begin{tikzpicture}[baseline=-0.65ex, thick, scale=0.3]
\draw (-1, 0) [->]to (0, 0);
\end{tikzpicture}\quad\quad
\begin{tikzpicture}[baseline=-0.65ex, thick, scale=0.3]
\draw (0, 4.5) to (-4, 0.5);
\draw (0, 4.5) to (4, 0.5);
\draw (3, 1.5) to (2, 0.5);
\draw (-3, 1.5) to (-2, 0.5);
\draw (-2, 2.5) to (0, 0.5);
\draw (0, -4.5) to (-4, -0.5);
\draw (0, -4.5) to (4, -0.5);
\draw (1, -3.5) to (-2, -0.5);
\draw (0, -2.5) to (2, -0.5);
\draw (1, -1.5) to (0, -0.5);
\end{tikzpicture}\quad\quad
\begin{tikzpicture}[baseline=-0.65ex, thick, scale=0.3]
\draw (-1, 0) [->]to (0, 0);
\end{tikzpicture}\quad\quad
\begin{tikzpicture}[baseline=-0.65ex, thick, scale=0.3]
\draw (0, 4) to (-4, 0);
\draw (0, 4) to (4, 0);
\draw (3, 1) to (2, 0);
\draw (-3, 1) to (-2, 0);
\draw (-2, 2) to (0, 0);
\draw (0, -4) to (-4, 0);
\draw (0, -4) to (4, 0);
\draw (1, -3) to (-2, 0);
\draw (0, -2) to (2, 0);
\draw (1, -1) to (0, 0);
\draw [dotted]  (-5, 0) to (5, 0);
\end{tikzpicture}
 \caption{A tree pair aligned vertically to form a tree diagram.
 \label{fig1}}
 \end{center}
\end{figure}
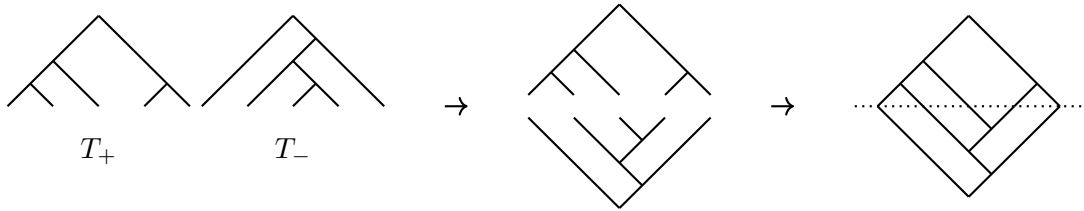


%
%

%

%
%
%
For each tree diagram we introduce the following operation.
When we replace a leaf on the horizontal line with a pair of carets (see Fig. \ref{fig2}), 
we have constructed a new tree diagram. 

%
\begin{figure}[htp]
\begin{center}
\begin{tikzpicture}[baseline=-0.65ex, thick, scale=0.4]
\draw (0, 4) to (-4, 0);
\draw (0, 4) to (4, 0);
\draw (3, 1) to (2, 0);
\draw (-3, 1) to (-2, 0);
\draw (-2, 2) to (0, 0);
\draw (0, -4) to (-4, 0);
\draw (0, -4) to (4, 0);
\draw (1, -3) to (-2, 0);
\draw (0, -2) to (2, 0);
\draw (1, -1) to (0, 0);
\draw [dotted]  (-5, 0) to (5, 0);
\end{tikzpicture}
\begin{tikzpicture}[baseline=-0.65ex, thick, scale=0.3]
\draw (-1, 0) [->]to (0, 0);
\end{tikzpicture}\quad\quad
\begin{tikzpicture}[baseline=-0.65ex, thick, scale=0.4]
\draw (0, 4.5) to (-4.5, 0);
\draw (0, 4.5) to (4.5, 0);
\draw (3, 1.5) to (-0.5, -2) ;
\draw (-3, 1.5) to (-2, 0.5);
\draw (-2, 2.5) to (1, -0.5);
\draw (0, -4.5) to (-4.5, 0);
\draw (0, -4.5) to (4.5, 0);
\draw (1, -3.5) to (-2, -0.5);
\draw [red] (-2, 0.5) to (-2.5, 0);
\draw [red] (-2, -0.5) to (-2.5, 0);
\draw [red] (-2, -0.5) to (-1.5, 0);
\draw [red] (-2, 0.5) to (-1.5, 0);
\draw [dotted]  (-5, 0) to (5, 0);
\end{tikzpicture}
 \caption{An elementary expansion.}
 \label{fig2}
 \end{center}
 \end{figure}
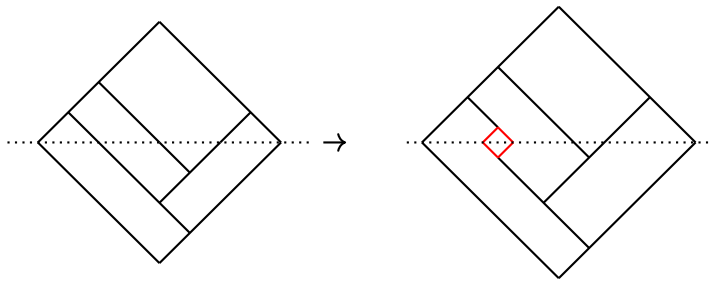

We call this process an {\it elementary expansion}. 
The reversing process is called {\it elementary reduction}. 
Here, a {\it caret} is a binary tree with one root and two leaves 
which is a building block of the binary tree. 
 \begin{center}
\begin{tikzpicture}[baseline=-0.65ex, thick, scale=0.4]
\draw (-3, 1) to (-4, 0);
\draw (-3, 1) to (-2, 0);
\end{tikzpicture}
 \end{center}
 %
A tree diagram is said to be {\it reduced} if no further elementary reductions can be applied to it. 
Two tree diagrams are said to be {\it equivalent} if they
are related to each other by applying finitely many elementary expansions and reductions. 
%
Taking a tree diagram, if we keep performing elementary reductions, 
we will obtain a unique reduced tree diagram
which gives a representative of the equivalent class \cite{MR1426438, key3589908m}.

We then describe the multiplication of two tree diagrams 
which provide the group operation. %
Take two tree diagrams $(T_+, T_-)$ and $(T'_+, T'_-)$
and place them vertically 
such that the root of $T_-$ and the root of $T'_+$ can be connected by a vertical line.
The multiplication process requires us to replace the $X$ shaped graph 
with two vertical lines, as shown in Fig. \ref{replace}.

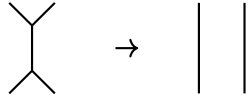
\begin{figure}[htp]
\begin{center}
\begin{tikzpicture}[baseline=-0.65ex, thick, scale=0.3]
\draw (-1, 2) to (0, 1);
\draw (1, 2) to (0, 1);
\draw (0, -1) to (0, 1);
\draw (-1, -2) to (0, -1);
\draw (1, -2) to (0, -1);
\end{tikzpicture}\quad\quad
\begin{tikzpicture}[baseline=-0.65ex, thick, scale=0.3]
\draw (-1, 0) [->]to (0, 0);
\end{tikzpicture}\quad\quad
\begin{tikzpicture}[baseline=-0.65ex, thick, scale=0.3]
\draw (-1, 2) to (-1, -2);
\draw (1, 2) to (1, -2);
\end{tikzpicture}
\end{center} 
\caption{Replacing the local diagram on the left by two vertical lines.}
 \label{replace}
\end{figure}

Then we adjust the graph 
and a reduced tree diagram can then be obtained 
by performing elementary reductions. 
See Fig. \ref{fig3} for an example.
%
%

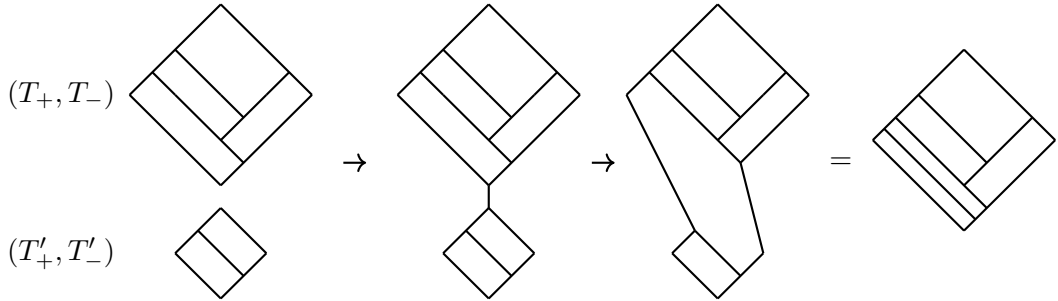
\begin{figure}[htp]
\begin{tikzpicture}[baseline=-0.65ex, thick, scale=0.3]
\draw (0, 4) to (-4, 0);
\draw (0, 4) to (4, 0);
\draw (3, 1) to (2, 0);
\draw (-3, 1) to (-2, 0);
\draw (-2, 2) to (0, 0);
\draw (0, -4) to (-4, 0);
\draw (0, -4) to (4, 0);
\draw (1, -3) to (-2, 0);
\draw (0, -2) to (2, 0);
\draw (1, -1) to (0, 0);
\draw (-7, 0) node{$(T_+, T_-)$};
\draw (0, -5) to (-2,-7);
\draw (0, -5) to (2, -7);
\draw (0, -9) to (-2,-7);
\draw (0, -9) to (2, -7);
\draw (-1, -6) to (1, -8);
\draw (-7, -7) node{$(T'_+, T'_-)$};
\end{tikzpicture}\quad
\begin{tikzpicture}[baseline=-0.65ex, thick, scale=0.3]
\draw (-1, -3) [->]to (0, -3);
\end{tikzpicture}\quad
\begin{tikzpicture}[baseline=-0.65ex, thick, scale=0.3]
\draw (0, 4) to (-4, 0);
\draw (0, 4) to (4, 0);
\draw (3, 1) to (2, 0);
\draw (-3, 1) to (-2, 0);
\draw (-2, 2) to (0, 0);
\draw (0, -4) to (-4, 0);
\draw (0, -4) to (4, 0);
\draw (1, -3) to (-2, 0);
\draw (0, -2) to (2, 0);
\draw (1, -1) to (0, 0);
\draw (0, -5) to (-2,-7);
\draw (0, -5) to (2, -7);
\draw (0, -9) to (-2,-7);
\draw (0, -9) to (2, -7);
\draw (-1, -6) to (1, -8);
\draw (0, -4) to (0, -5);
\end{tikzpicture}
\begin{tikzpicture}[baseline=-0.65ex, thick, scale=0.3]
\draw (-1, -3) [->]to (0, -3);
\end{tikzpicture}
\begin{tikzpicture}[baseline=-0.65ex, thick, scale=0.3]
\draw (0, 4) to (-4, 0);
\draw (0, 4) to (4, 0);
\draw (3, 1) to (2, 0);
\draw (-3, 1) to (-2, 0);
\draw (-2, 2) to (0, 0);
\draw (1, -3) to (4, 0);
\draw (1, -3) to (-2, 0);
\draw (0, -2) to (2, 0);
\draw (1, -1) to (0, 0);
\draw  (-1, -6) to (-2,-7);
\draw (0, -9) to (-2,-7);
\draw (0, -9) to (2, -7);
\draw (-1, -6) to (1, -8);
\draw (-1, -6) to (-4, 0);
\draw (2, -7) to (1, -3);
\end{tikzpicture}
\begin{tikzpicture}[baseline=-0.65ex, thick, scale=0.3]
\draw (-1, -3) node{$=$};
\end{tikzpicture}
\begin{tikzpicture}[baseline=-0.65ex, thick, scale=0.3]
\draw (0, 2) to (-4, -2);
\draw (0, 2) to (4, -2);
\draw (0, -6) to (-4, -2);
\draw (0, -6) to (4,-2);
\draw (0.5, -5.5) to (-3.5, -1.5);
\draw (-3, -1) to (1, -5);
\draw (-2, 0) to (1, -3);
\draw (0, -4) to (3, -1);
\end{tikzpicture}
 \caption{The multiplication of two tree diagrams.}
 \label{fig3}
 \end{figure}

\begin{theorem}[Thompson's group $F$]
The set of equivalent classes of tree diagrams together with 
the multiplication described is isomorphic to the Thompson's group $F.$
\end{theorem}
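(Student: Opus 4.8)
\emph{The plan} is to realize the equivalence classes of tree diagrams concretely as the group $F$ of dyadic piecewise-linear (PL) homeomorphisms of $[0,1]$, and to exhibit the isomorphism explicitly. First I would attach to each rooted binary tree $T$ with $n$ leaves a dyadic subdivision $\mathcal{P}(T)$ of $[0,1]$: the root carries the whole interval $[0,1]$, and whenever a caret splits a node carrying an interval $[a,b]$, its two children carry $[a,(a+b)/2]$ and $[(a+b)/2,b]$; reading the leaves from left to right then lists the subintervals of $\mathcal{P}(T)$. To a tree diagram $(T_+,T_-)$, whose two trees have the same number $n$ of leaves, I associate the homeomorphism $\phi_{(T_+,T_-)}\colon[0,1]\to[0,1]$ that sends the $i$-th interval of $\mathcal{P}(T_+)$ affinely and increasingly onto the $i$-th interval of $\mathcal{P}(T_-)$ for each $i$. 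Since the breakpoints are dyadic and the slopes are ratios of dyadic lengths, $\phi_{(T_+,T_-)}$ lies in $F$.

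The next step is to show that $\phi$ is invariant under the elementary moves. An elementary expansion replaces a single leaf by a caret in both trees simultaneously, which subdivides one interval of $\mathcal{P}(T_+)$ and its image interval of $\mathcal{P}(T_-)$ each into two equal halves; on each half the prescribed affine map agrees with the original one, so $\phi$ is unchanged. Hence $\phi$ descends to a well-defined map on equivalence classes, and by the uniqueness of the reduced representative stated above it suffices to compare reduced diagrams.

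I would then verify compatibility with the multiplication together with its well-definedness. Given diagrams $(T_+,T_-)$ and $(T'_+,T'_-)$, I use the standard fact that any two finite binary trees admit a common expansion, applied to $T_-$ and $T'_+$; after expanding, the two diagrams may be taken so that $T_-=T'_+$, which is exactly the situation in which the middle step of the construction---replacing each $X$-shaped piece by two vertical strands (Fig. \ref{replace})---cancels the matching trees and leaves the diagram $(T_+,T'_-)$. Reading off the associated partitions shows that $\phi_{(T_+,T'_-)}=\phi_{(T'_+,T'_-)}\circ\phi_{(T_+,T_-)}$ in the appropriate order, so the combinatorial product computes the composition of the associated PL maps. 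Because the right-hand side depends only on the two homeomorphisms and $\phi$ is injective on equivalence classes, this simultaneously proves that the product is independent of the chosen representatives; thus the equivalence classes form a group on which $\phi$ is an isomorphism onto its image.

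Finally I would check that $\phi$ is a bijection onto $F$. For surjectivity, any $f\in F$ is affine on the pieces of some dyadic subdivision of its domain whose image pieces are again dyadic intervals; refining if necessary, both subdivisions arise as $\mathcal{P}(T_+)$ and $\mathcal{P}(T_-)$ for suitable trees, whence $f=\phi_{(T_+,T_-)}$. For injectivity, if $\phi_{(T_+,T_-)}=\mathrm{id}$ then $\mathcal{P}(T_+)=\mathcal{P}(T_-)$, and since standard dyadic partitions correspond bijectively to binary trees this forces $T_+=T_-$, so the reduced form of $(T_+,T_-)$ is trivial; combined with the homomorphism property this yields injectivity on equivalence classes. \emph{The main obstacle} I anticipate is the multiplication step: one must argue carefully that bringing $T_-$ and $T'_+$ to a common expansion, performing the $X\mapsto\|$ replacement, and then reducing genuinely realizes composition rather than merely some map sharing the same endpoint data---this is where the expansion-invariance established above and the existence of common refinements of binary trees do the real work.
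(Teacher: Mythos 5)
Your proposal is correct, but there is nothing in the paper to compare it against: the authors state this theorem purely as background and give no proof, implicitly deferring to the standard references (Cannon--Floyd--Parry and Jones). What you have written is essentially that standard argument --- attach to each tree its standard dyadic partition of $[0,1]$, send a tree pair to the PL homeomorphism matching the two partitions affinely, check invariance under expansion (affine maps preserve midpoints, so halving source and target intervals changes nothing), realize the diagrammatic product via a common expansion of $T_-$ and $T'_+$, and prove bijectivity onto the group of dyadic PL homeomorphisms. Two small points you should tighten if you write this out in full: (i) the composition order --- with the paper's convention of stacking $(T_+,T_-)$ above $(T'_+,T'_-)$, your formula $\phi_{(T_+,T'_-)}=\phi_{(T'_+,T'_-)}\circ\phi_{(T_+,T_-)}$ makes $\phi$ an anti-homomorphism unless you fix the convention for function composition accordingly, so state the convention explicitly rather than writing ``in the appropriate order''; (ii) in the surjectivity step, the breakpoints of an $f\in F$ give a dyadic partition that need not be \emph{standard} (i.e., need not arise from a binary tree), so you need the lemma that every dyadic partition refines to a standard one and that the refinement can be chosen compatibly on source and target --- you gesture at this with ``refining if necessary,'' but it is the one place where a genuine (if routine) lemma is being invoked. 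With those caveats the proof is complete and is the expected one.
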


%

%
From now on, we regard a tree diagram as an element of $F$. 
The group $F$ has finiteness properties 
\cite{MR885095} 
and a really nicely computable infinitely generating set $\{x_i\}_{i=0}^{\infty}$.
See Fig. \ref{ffig3} for the illustration of $x_i$'s,
where the first two $x_0$ and $x_1$ in the sequence form a finite generating set. 
For every reduced tree diagram, there is a unique word 
with respect to this infinite generating set,
and the word can be read off directly from the tree diagram \cite{MR1426438}. 
We omit the details here.
%
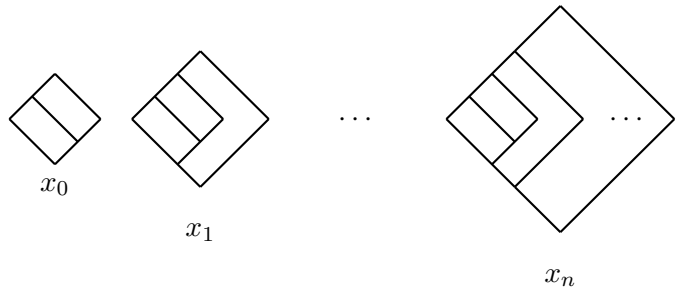
\begin{figure}[htp]
\begin{tikzpicture}[baseline=-0.65ex, thick, scale=0.3]
\draw (0, 2) to (-2,0);
\draw (0, 2) to (2, 0);
\draw (0, -2) to (-2,0);
\draw (0, -2) to (2, 0);
\draw (-1, 1) to (1, -1);
\draw (0, -3) node{$x_0$};
\end{tikzpicture}\quad
\begin{tikzpicture}[baseline=-0.65ex, thick, scale=0.3]
\draw (1, 3) to (-2,0);
\draw (0, 2) to (2, 0);
\draw (1, -3) to (-2,0);
\draw (0, -2) to (2, 0);
\draw (-1, 1) to (1, -1);
\draw (1, 3) to (4,0);
\draw (1, -3) to (4,0);
\draw (1, -5) node{$x_1$};
\end{tikzpicture}\quad\quad
\begin{tikzpicture}[baseline=-0.65ex, thick, scale=0.3]
\draw (0, 0) node{$\cdots$};
\end{tikzpicture}\quad\quad
\begin{tikzpicture}[baseline=-0.65ex, thick, scale=0.3]
\draw (1, 3) to (-2,0);
\draw (0, 2) to (2, 0);
\draw (1, -3) to (-2,0);
\draw (0, -2) to (2, 0);
\draw (-1, 1) to (1, -1);
\draw (1, 3) to (4,0);
\draw (1, -3) to (4,0);
\draw (3, 5) to (1,3);
\draw (3,5) to (8,0);
\draw (3, -5) to (1,-3);
\draw (3,-5) to (8,0);
\draw (6, 0) node{$\cdots$};
\draw (3, -7) node{$x_n$};
\end{tikzpicture}
\caption{The infinite generating set of $F$. 
The number of leaves in $x_i$ is $i+3$.}
\label{ffig3}
\end{figure}

\subsection{Jones's construction}
\label{sec2.2}
In \cite{key3589908m, key3986040m} Jones introduced two equivalent methods 
to produce unoriented knots and links from elements of the Thompson's group $F$. 
Here we briefly review both of them. 
For more details, 
please see Jones's original papers or the nice survey \cite{Aiello:2022ac}. 

%
\subsubsection{Constructing Thompson Tait graphs}

%
Taking a tree diagram $(T_{+}, T_{-}),$ 
we construct the corresponding Tait graph $\Gamma (T_{+}, T_{-})$ as follows.

We have identified the leaves of $T_{+}$ and $T_{-}$ 
by putting them on a horizontal line. 
Next we label their leaves by $\{\gamma_0, \gamma_1, \cdots, \gamma_{n-1}\}$ 
from left to right, 
where $n$ is the number of leaves in $T_{+}$ (and thus in $T_{-}$).
For $\Gamma (T_{+}, T_{-})$, all vertices are drawn on the same horizontal line. 
Then we draw one vertex in front of $\gamma_0$ and denote it by $v_0,$ 
and for the rest, we draw one vertex each between two aligned vertices in 
$\{\gamma_0, \gamma_1, \cdots, \gamma_{n-1}\}$ 
and denote them by $\{v_1, v_2, \cdots, v_{n-1}\}$ from left to right. 

For edges of $\Gamma (T_{+}, T_{-})$, 
we first focus on the tree $T_{+}$. 
For each edge in $T_{+}$ with positive slope, 
we draw an edge $e$ passing across it 
so that $e$ connecting two vertices in $\{v_0, v_1, \cdots, v_{n-1}\}$ 
and has no intersection with the other part of $(T_{+}, T_{-})$.  
For each edge in $T_{-}$ with negative slope, 
we draw an edge similarly. 
See Fig. \ref{fig7} for an example of constructing Tait graph. 
Moreover, we assign a positive sign to each edge constructed from $T_{+}$ 
and a negative sign to each edge constructed from $T_{-}$. 

\begin{figure}[htp]
\centerline{
\includegraphics[width=6in]{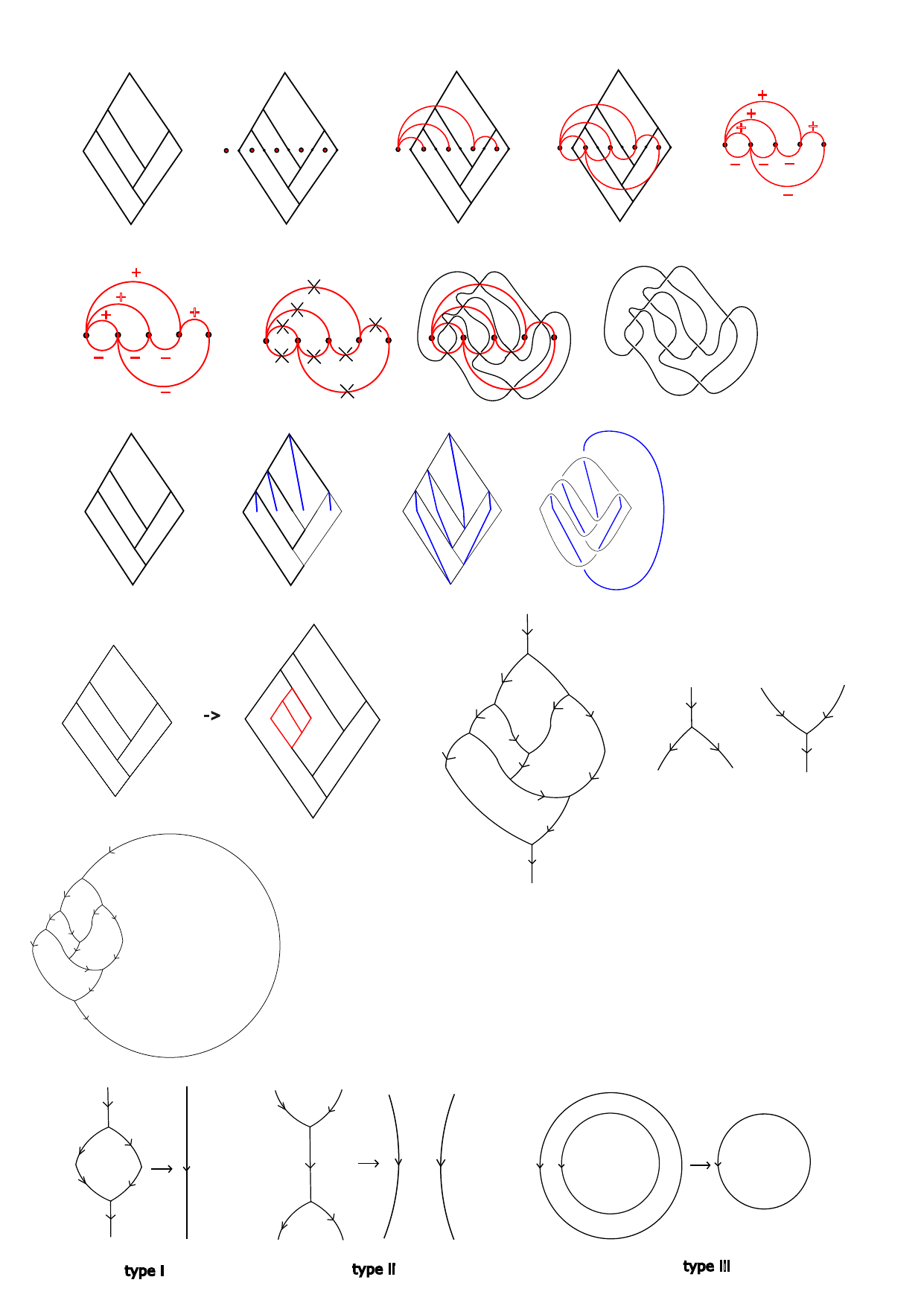}}
 \caption{From a Tait graph to a link diagram. \label{fig7}}
\end{figure}

Now since $\Gamma (T_{+}, T_{-})$ is a signed planar graph, 
we follow the standard method in knot theory to construct a link diagram. 
For an edge with a positive or negative sign, 
we assign a crossing as in Fig. \ref{tait}. See Fig. \ref{fig8} for an example. 
More details can be found in \cite{key3986040m}.
\begin{figure}[htp]
\begin{center}
\begin{tikzpicture}[baseline=-0.65ex, thick, scale=1]
\draw [red] (-1, 0) to (1,0);
\draw (-1, 0) node{$\bullet$};
\draw  (1, 0) node{$\bullet$};
\draw [red] (-1.5, 0)  node{$+$};
\draw (-0.5, -0.5)  to (0.5, 0.5);
\draw (0.5, -0.5)  to (0.2, -0.2);
\draw (-0.5, 0.5)  to (-0.2, 0.2);
\end{tikzpicture}\quad \quad\quad
\begin{tikzpicture}[baseline=-0.65ex, thick, scale=1]
\draw [red] (-1, 0) to (1,0);
\draw (-1, 0) node{$\bullet$};
\draw (1, 0) node{$\bullet$};
\draw [red] (-1.5, 0)  node{$-$};
\draw (-0.5, 0.5)  to (0.5, -0.5);
\draw (-0.5, -0.5)  to (-0.2, -0.2);
\draw (0.5, 0.5)  to (0.2, 0.2);
\end{tikzpicture}
 \caption{Assign a crossing to an edge of the Tait graph. \label{tait}}
 \end{center}
\end{figure}
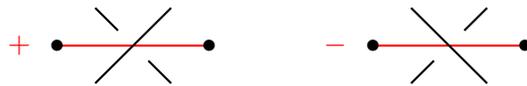

\begin{figure}[htp]
\centerline{
\includegraphics[width=5in]{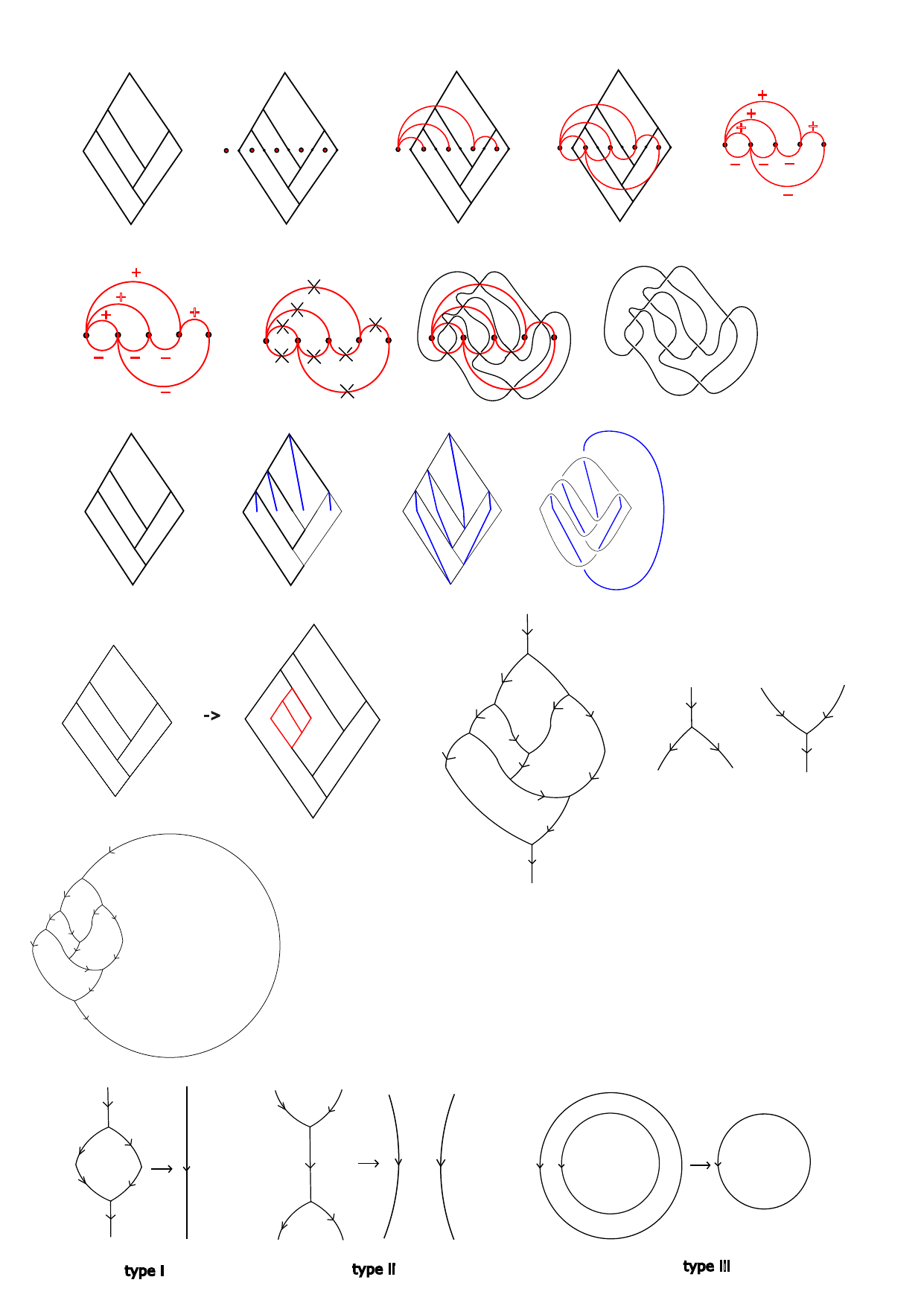}}
 \caption{From a Tait graph to a link diagram. \label{fig8}}
\end{figure}

\subsubsection{An equivalent way of the construction}
There is an equivalent way to construct knots and links from elements of Thompson's group $F$ 
without going through the Tait graphs \cite{key3986040m}. 

We take again an element $g$ of Thompson's group $F$ 
represented by a tree diagram $(T_{+}, T_{-})$.
The leaves of $T_{+}$ and $T_{-}$ sit on a horizontal line. 
We label them by $\{\gamma_0, \gamma_1, \cdots, \gamma_{n-1}\}$ from left to right. 
Now we insert a new vertex $w_i$ between $\gamma_{i-1}$ and $\gamma_{i}$, 
for $i=0, 1, \cdots, n-1$. 
For each $w_i$, we add an edge passing through $w_i$
from a trivalent vertex or the root of $T_{+}$ to a trivalent vertex or the root of $T_{-}$ 
in the only possible planar way. 
The last step is to take a closure of diagram by 
connecting the root of $T_{+}$ and the root of $T_{-}$. 
Now all the trivalent vertices are turned into $4$-valent. 
See Fig. \ref{fig01} for an example. 

\begin{figure}[htp]
\centerline{
\includegraphics[width=5in]{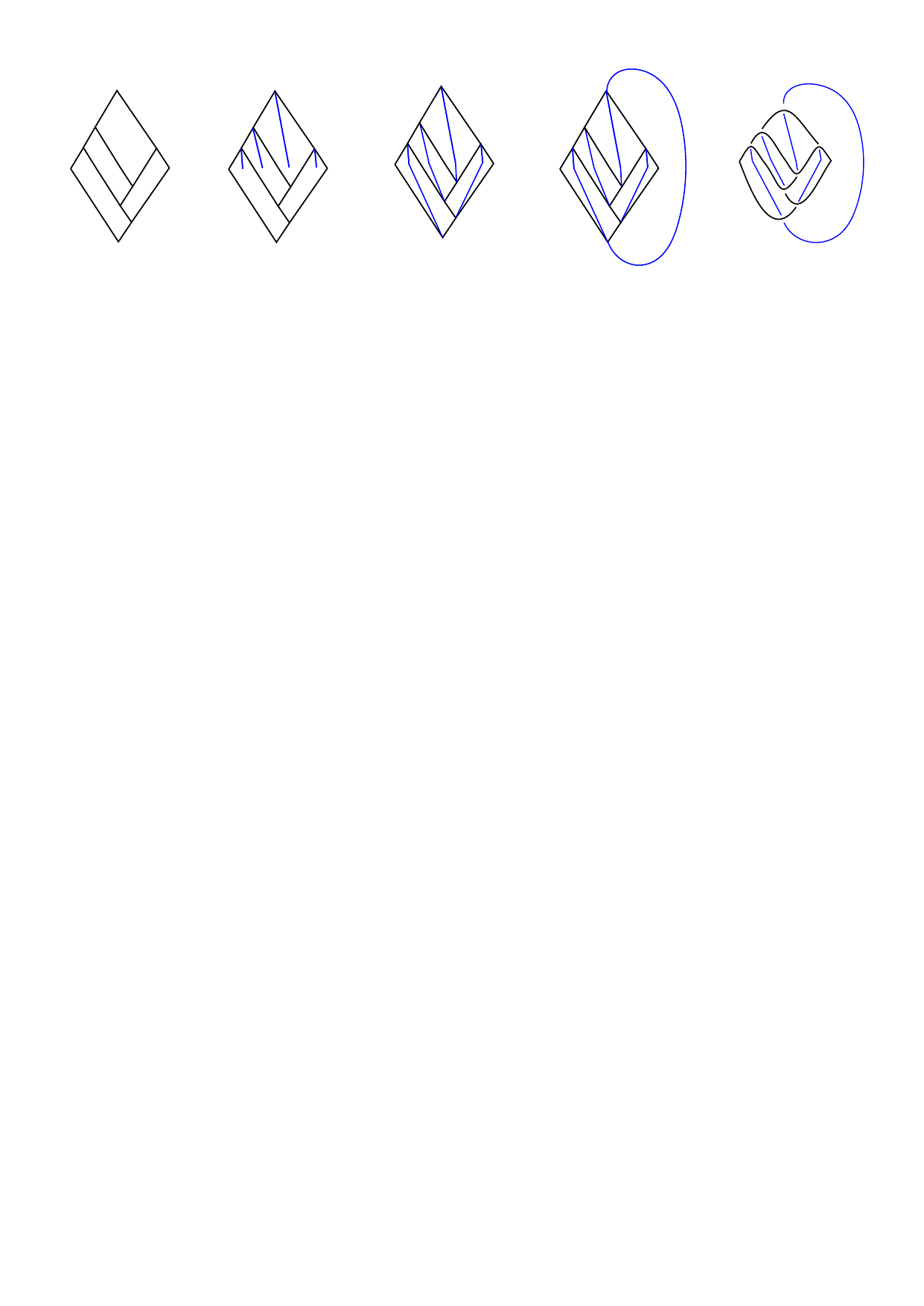}}
 \caption{Connecting vertices of $T_{+}$ and $T_{-}$.}
 \label{fig01}
\end{figure}

Next we can get a link diagram by replacing each $4$-valent 
by a crossing 
so that the two edges connecting a node to its two descendants in $T_{+}$ and $T_{-}$ 
become the overpass. 
We then obtain a link diagram. 
See Fig. \ref{fig02} for an example. 

\begin{figure}[htp]
\centerline{
\includegraphics[width=2.5in]{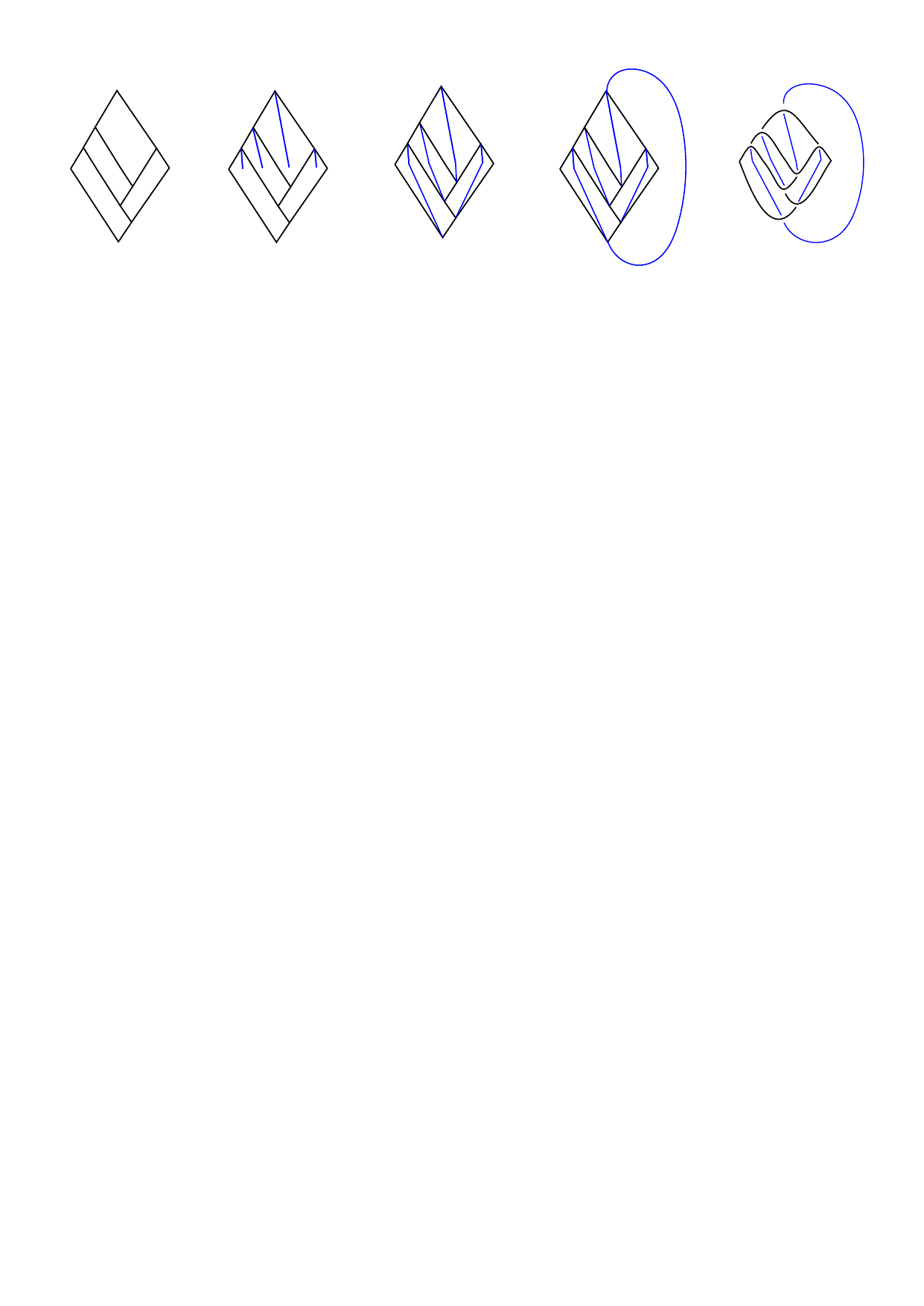}}
 \caption{Changing each $4$-valent vertex into a crossing.}
 \label{fig02}
\end{figure}

The procedure above gives us a concrete way 
to construct a link from a tree diagram. 
It is easy to see that if two tree diagrams are connected by an elementary reduction, 
the two links obtained from them only differ by a trivial component. 
Therefore we have the following map 
\begin{eqnarray*}
J: F &\to& \{\text{unoriented links}\}\\
g&\mapsto& \mathcal{L}_g
\end{eqnarray*}
Here we identify two links if they only differ by trivial components. 
More importantly, Jones proved that the map above is surjective. 

\begin{theorem}[Jones \cite{key3986040m}]
Given an unoriented link $L$, 
there exists an element $g\in F$ such that $\mathcal{L}_g=L$.
\label{jones}
\end{theorem}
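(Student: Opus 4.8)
The plan is to reduce the statement to a purely combinatorial fact about signed planar graphs, and then to realize the required graphs as Thompson Tait graphs by exploiting the freedom to choose \emph{any} diagram of $L$, together with elementary expansions (which, as noted in Section~\ref{sec2.2}, change the output only by a trivial component and hence not at all under the identification built into $J$).

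First I would fix a connected diagram $D$ of $L$ and recall the classical checkerboard correspondence: shading the regions of $D$ in two colours produces a signed planar graph $G_D$, with one vertex per shaded region and one signed edge per crossing, and $D$ is recovered from $G_D$ by the Tait construction whose local rule is Fig.~\ref{tait}. Conversely, every signed planar graph yields a link diagram this way. Thus producing $L$ from an element of $F$ is equivalent to exhibiting a signed planar graph $G$, whose associated link diagram is $L$, in the special shape of a Thompson Tait graph: all vertices on a horizontal line, the positively signed edges drawn as pairwise non-crossing arcs above the line, the negatively signed edges as pairwise non-crossing arcs below it, and each of the two arc families forming the laminar pattern cut out by a rooted binary tree, exactly as in Section~\ref{sec2.2}. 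Equivalently, I must place $G$ in a two-page book embedding whose spine carries all the vertices, with the positive page and the negative page each carrying one of the two sign classes.

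The heart of the argument is therefore: given $L$, choose the diagram $D$ so that $G_D$ admits such a sign-respecting two-page embedding together with the binary-tree refinement of each page. Since not every planar graph is two-page embeddable on its own, the key is to use the latitude in the choice of $D$. I would put $D$ into a Morse (plat) position with respect to a height function, so that $L$ appears as a vertical stack of elementary tangles: local minima (cups), local maxima (caps), and crossings. Reading the cups off the bottom and the caps off the top recovers precisely the two rooted binary trees, a cup-pattern being the laminar family dual to a source tree $T_+$ and a cap-pattern the laminar family dual to a target tree $T_-$, while the crossings in between are produced exactly at the trivalent vertices as in Figs.~\ref{fig01}--\ref{fig02}. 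Matching the crossing signs with the $\pm$ rule of Fig.~\ref{tait} may force the insertion of extra cups and caps, i.e. elementary expansions, but these alter the output only by split unknotted components and so are harmless under $J$.

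The step I expect to be the main obstacle is controlling the crossings. A generic Morse diagram has crossings occurring between cup and cap strands in an arbitrary nesting, whereas the Thompson picture forces each crossing to sit at a node shared by $T_+$ and $T_-$, with a prescribed local overpass convention. I would handle this by Reidemeister-II stabilisations and planar isotopies that slide every crossing into the shared-leaf position demanded by the construction, at the cost of enlarging the trees; verifying that this normalisation always terminates and that the signs come out consistently is the technical core. Once the diagram is in this normal form, the two nested arc families define a tree pair $(T_+,T_-)$, the corresponding $g\in F$ satisfies $\mathcal{L}_g=L$, and the proof is complete.
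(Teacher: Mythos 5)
This statement is imported from Jones's paper and is not proved in the present article, so there is no internal proof to compare against; what can be assessed is whether your sketch would actually establish the theorem. Your opening reduction is the right one and matches the strategy of Jones's original argument: since every signed planar graph yields a link diagram via the rule of Fig.~\ref{tait}, and since the graphs $\Gamma(T_+,T_-)$ are exactly the signed graphs whose vertices lie on a line with the positive edges forming one laminar spanning tree above the line and the negative edges another below it, the theorem is equivalent to showing that every checkerboard graph of $L$ can be brought into this normal form by moves that do not change the link (up to split unknots).

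The gap is that you never carry out this normalization; you explicitly defer it (``verifying that this normalisation always terminates and that the signs come out consistently is the technical core''), and that deferred step \emph{is} the theorem. Moreover, the route you propose for it --- putting $D$ in Morse/plat position and identifying cup and cap patterns with $T_+$ and $T_-$ --- does not line up cleanly with Jones's construction: in the construction of Section~\ref{sec2.2} the crossings of $\mathcal{L}_g$ sit at the trivalent vertices of the two trees with a prescribed overpass convention, and the resulting diagram is not a plat whose minima and maxima read off $T_+$ and $T_-$, so the claimed dictionary between cup/cap patterns and the tree pair needs justification. Jones's actual argument stays on the graph side: one modifies the signed Tait graph directly by elementary, link-preserving graph moves (subdividing an edge into two edges of opposite signs, doubling an edge by a parallel edge of opposite sign, adding pendant edges --- each realizing a Reidemeister~I or~II move or a split unknot) and shows these suffice to reach a graph of the form $\Gamma(T_+,T_-)$. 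To complete your proof you would need either to supply that explicit graph-theoretic normalization with a termination argument, or to make the plat-position dictionary precise; as written, the proposal is a correct reduction followed by an unproven claim.
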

The map $J$ is far from being injective, 
as we could see in the following example. 
A version of Markov theorem for Thompson's group $F$ aims to 
figure out the relations between the pre-images of a given link under $J$.
\begin{example}
For a tree diagram $g=(T_+, T_-)$, 
we choose a leaf and add the tree diagram representing $x_0$ to this leaf 
to get a new tree diagram $(T'_+, T'_-)$, 
as in Fig. \ref{ffig9} (left). 
Then $(T_+, T_-)$ and $(T'_+, T'_-)$ correspond to different elements in Thompson's group, 
but they are sent to the same unoriented link by $J$. Indeed, adding $x_0$ to a leaf amounts to 
taking a connected sum of $\mathcal{L}_g$ with a trivial knot,
as we can see in Fig. \ref{ffig9} (right). 
\end{example}

\begin{figure}[htp]
\centerline{\includegraphics[width=5in]{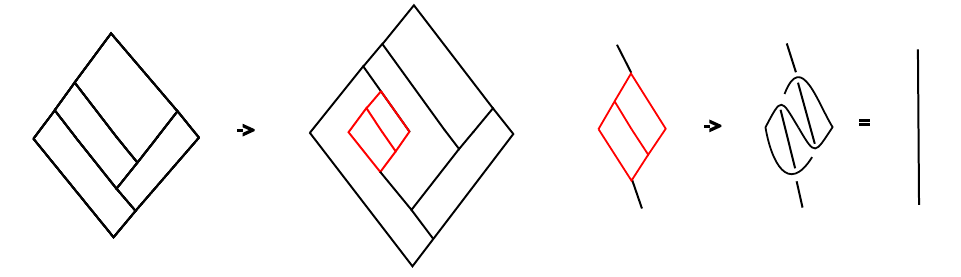}}
\caption{Different elements that produce the same link.}
\label{ffig9}
\end{figure}


%

%
%
\subsection{Conjugacy classes of $F$ and annular strand diagrams} 
\label{sec2.3}

In this section, 
we stray from the path of Jones's construction on knots and links a little 
and focus on the conjugacy problem of $F$ 
and its graphical solution. 
The conjugacy problem of Thompson's group $F$ has been solved 
\cite{MR1396957} long ago 
and solutions from this very different perspective are provided recently 
by Belk and Matucci 
\cite{Belk2014} 
using strand diagrams. 
To explain this, 
we first define the tool used here, 
the strand diagram.

%

\begin{definition}[Strand diagram \cite{Belk2014}]
\rm 
A {\it strand diagram} is a directed acyclic planar graphs 
in the unit square 
satisfying the following conditions.
\begin{enumerate}
\item There is a unique univalent vertex 
which is a source on the top boundary of the unit square, 
and a unique univalent vertex 
which is a sink on the bottom boundary of the unit square;
\begin{figure}[htp]
\centerline{\includegraphics[width=1.25in]{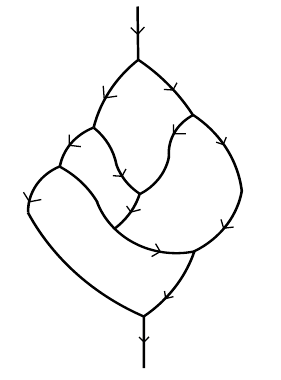}}
\caption{An example of a strand diagram}
\label{fig4}
\end{figure}
\item All other vertices are trivalent, 
and each of them is either a split or a merge.
\begin{figure}[htp]
\centerline{\includegraphics[width=1.5in]{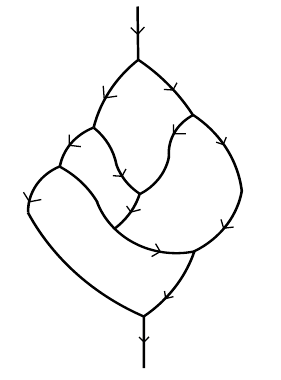}}
\caption{A split and a merge.}
\label{fig5}
\end{figure}
\end{enumerate}
\label{defsd}
\end{definition}
Two strand diagrams are said to be {\it equivalent} 
if they are connected by a finite sequence of moves of 
type I and type II in Fig. \ref{fig12}. 

A tree diagram $(T_+, T_-)$ can be identified with a strand diagram 
since we can add an up-to-down direction to each edge of $T_{+}$ and $T_{-}$. 
Then all the trivalent vertices of $T_+$ (resp. $T_-$) are splits (resp. merges). 
Under this identification, we have the following. 
\begin{theorem}[\cite{Belk2014}] 
Thompson’s group $F$ is isomorphic to 
the group of all equivalence classes of strand diagrams, 
with product induced by concatenation.
\label{thmb2014}
\end{theorem}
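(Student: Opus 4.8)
The plan is to turn the identification described just before the statement into an explicit isomorphism $\Phi$ and to verify the three facts that together give the theorem: that $\Phi$ is well defined on equivalence classes, that it is a homomorphism, and that it is a bijection. First I would fix $\Phi$ on tree diagrams by the recipe already given: to a tree diagram $(T_+,T_-)$ assign the strand diagram obtained by orienting every edge from top to bottom, so that $T_+$ becomes a cascade of splits issuing from the single source (the root of $T_+$) and the flipped $T_-$ becomes a cascade of merges terminating at the single sink (the root of $T_-$). To see that $\Phi$ descends to equivalence classes, I would observe that an elementary expansion (Fig.~\ref{fig2}), which attaches a common caret to corresponding leaves of $T_+$ and $T_-$, corresponds exactly to inserting a split immediately above a merge that consumes its two output edges, i.e.\ a type~I configuration of Fig.~\ref{fig12}. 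Hence an elementary reduction is precisely a type~I reduction, and equivalent tree diagrams have equivalent images.

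Next, the homomorphism property. The product of $(T_+,T_-)$ and $(T'_+,T'_-)$ is formed by stacking the two diagrams, joining the root of $T_-$ to the root of $T'_+$, replacing the resulting $X$-shape by two parallel strands (Fig.~\ref{replace}), and then reducing (Fig.~\ref{fig3}). Under $\Phi$ this stacking is exactly concatenation of strand diagrams, identifying the sink of $\Phi(g)$ with the source of $\Phi(h)$, while the $X$-shape is a merge sitting directly above a split, whose replacement by two strands is a type~II move. Thus concatenating $\Phi(g)$ with $\Phi(h)$ and reducing yields $\Phi(gh)$, so $\Phi([g][h])=\Phi([g])\,\Phi([h])$; in particular $\Phi$ sends the one-leaf tree pair to the single-strand identity diagram.

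For bijectivity I would introduce the rewriting system on strand diagrams generated by the type~I and type~II reductions. Each such reduction deletes two trivalent vertices, so the number of vertices strictly decreases and the process terminates. The key combinatorial lemma is that this system is locally, hence globally, confluent: I would enumerate the finitely many ways two reduction sites can overlap and resolve each one by a standard diamond argument, so that every strand diagram possesses a \emph{unique} reduced representative. The decisive structural point, which I expect to be the main obstacle, is to prove that a reduced strand diagram with a single source and single sink is necessarily in tree form, meaning that along every directed path all splits occur above all merges; this should follow from planarity, acyclicity, and the single-source/single-sink condition, which together forbid a merge from feeding, directly or indirectly, into a split in a reduced diagram.

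Granting this, the reduced strand diagrams are exactly the reduced tree diagrams. Surjectivity of $\Phi$ then holds because an arbitrary strand diagram reduces to such a tree form, and injectivity holds because a reduced tree diagram is already reduced as a strand diagram: having no common caret it admits no type~I site, and having all splits above all merges it admits no type~II site. Consequently two inequivalent reduced tree diagrams map to distinct reduced, and therefore inequivalent, strand diagrams. Since a bijective homomorphism is an isomorphism, this completes the proof.
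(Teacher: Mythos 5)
The paper does not actually prove this statement; it is imported verbatim from Belk and Matucci \cite{Belk2014}, so there is no internal proof to compare against. Judged on its own, your sketch reconstructs essentially the argument of the cited source: translate tree diagrams to split-above-merge strand diagrams, match elementary reduction with the type~I (bigon) move and the $X$-replacement of Fig.~\ref{replace} with the type~II move, and then use termination plus confluence of the rewriting system to get unique reduced representatives, identifying these with reduced tree diagrams. Two remarks. First, the step you flag as the ``main obstacle'' is actually the easy one, and needs neither planarity nor the single-source/single-sink hypothesis: in a diagram with no type~II site, the unique out-edge of a merge cannot end at a split, so by induction along directed paths no merge ever feeds (directly or indirectly) into a split; since splits have in-degree $1$ and merges out-degree $1$, the splits form a tree hanging from the source and the merges a tree feeding the sink, which is exactly a tree diagram. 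Second, the step you pass over quickly --- local confluence, i.e.\ the resolution of overlapping type~I/type~II reduction sites --- is where the real (if routine) work lies, and as written it is asserted rather than carried out; to make the proof complete you would need to enumerate and resolve those critical pairs (or invoke the general confluence result for diagram groups of Guba--Sapir, which is what Belk--Matucci effectively rely on). With that caveat, the outline is sound and its logical architecture (well-definedness, homomorphism, normal forms, bijectivity) is the right one.
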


To investigate the conjugacy problem in $F$, 
one needs to take the ``closure" of the strand diagram 
and to consider the annular strand diagram.

\begin{definition}[Annular strand diagram \cite{Belk2014}]
\rm 
An {\it annular strand diagram} is a directed planar graph 
embedded in the annulus 
satisfying the following conditions.
\begin{enumerate}
\item Every vertex is trivalent, and is either a split or a merge;
\item Every directed cycle has positive winding number around the central hole.
\end{enumerate}
\label{defasd}
\end{definition}
\begin{figure}[htp]
\centerline{\includegraphics[width=2in]{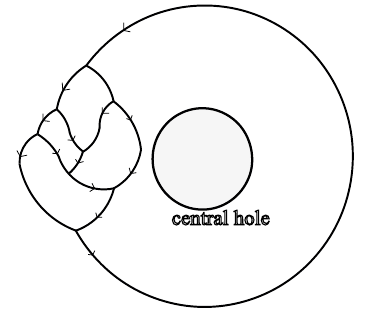}}
\caption{An example of an annular strand diagram.}
\label{fig6}
\end{figure}

Note that free loops are allowed in an annular strand diagram, 
and each free loop must wind counterclockwise 
around the central hole of the annulus.

A {\it reduction} of an annular strand diagram is either of the three moves 
in Fig. \ref{fig12}. 
An annular strand diagram is said to be {\it reduced} 
if no further reductions can be conducted. 
Two annular strand diagrams are said to be {\it equivalent} 
if they are connected by the three types of moves in Fig. \ref{fig12}. 
It is shown in \cite{Belk2014} that every annular strand diagram 
is equivalent to a unique reduced annular strand diagram. 

If we identify the top boundary and the bottom boundary of a unit square, 
we get an annulus. 
Under this identification, 
a strand diagram embedded in the unit square 
becomes an annular strand diagram in the annulus. 
Hence, a tree diagram representing an element of the Thompson's group $F$ 
corresponds to an annular strand diagram. 
We use $\mathcal{A}_h$ to denote the annular strand diagram 
corresponding to a tree diagram, 
and $r\mathcal{A}_h$ to denote the reduced annular strand diagram 
equivalent to $\mathcal{A}_h$. 
The following theorem provides a graphical solution to the 
conjugacy problem of $F$. 

%
\begin{theorem}[\cite{Belk2014}]
Two elements $h$ and $g$ of $F$ are in the same conjugacy class 
if and only if 
$\mathcal{A}_h$ and $\mathcal{A}_g$ are equivalent, i.e., $r\mathcal{A}_h=r\mathcal{A}_g$.
\label{conj}
\end{theorem}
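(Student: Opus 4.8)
The plan is to prove both implications by reducing everything to the single claim that two strand diagrams have equivalent closures precisely when the elements of $F$ they represent are conjugate; the uniqueness of the reduced annular strand diagram quoted above then converts ``equivalent'' into the equality $r\mathcal{A}_h = r\mathcal{A}_g$. I would begin with the forward direction. Suppose $g = khk^{-1}$ with $k \in F$, and fix strand diagrams $S_h, S_k$ representing $h$ and $k$. Under the identification of Theorem \ref{thmb2014}, a strand diagram for $g$ is obtained by vertically concatenating $S_k$, $S_h$, and $S_k^{-1}$, where $S_k^{-1}$ is $S_k$ reflected with reversed edge directions. When we glue the top boundary of the unit square to its bottom boundary to form $\mathcal{A}_g$, the block $S_k^{-1}$ becomes adjacent to the block $S_k$ along the seam. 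Since $S_k S_k^{-1}$ represents the identity, the two blocks cancel through a finite sequence of type I and type II moves carried out entirely inside the annulus; this exhibits an equivalence $\mathcal{A}_g \sim \mathcal{A}_h$, and by uniqueness of the reduced representative we conclude $r\mathcal{A}_g = r\mathcal{A}_h$. This is the diagrammatic incarnation of the cyclic (trace) invariance of a closure, and I expect it to be routine.

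For the converse I would argue that equivalent closures come from conjugate elements. The main construction is a \emph{cut}: choose an embedded arc in the annulus running from the outer boundary to the central hole that meets the graph transversally and avoids every vertex, then slice along it to unroll the annulus into the unit square, producing a strand diagram $S$ whose closure is the given annular diagram. Recovering $S_h$ from $\mathcal{A}_h$ is one such cut. I would then establish two facts. (i) Moving the cut arc across a single strand, or past a single split or merge, changes the resulting strand diagram by pre- and post-composition with a fixed elementary diagram and its inverse, i.e. by a conjugation in $F$; iterating, any two cuts of the \emph{same} annular diagram yield conjugate elements. (ii) Each annular reduction or equivalence move, when it is supported in a region disjoint from the cut arc, transports to an ordinary type I or type II move on $S$ and so does not change the element of $F$; when a move is forced to straddle the cut, one first slides the cut off its support using (i), introducing only a conjugation. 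Combining (i) and (ii): if $\mathcal{A}_h \sim \mathcal{A}_g$, choosing compatible cuts expresses $h$ and $g$ as elements cut from a common annular diagram, so they are conjugate.

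The hard part will be the bookkeeping in the converse, precisely step (ii): making rigorous that an arbitrary equivalence $\mathcal{A}_h \sim \mathcal{A}_g$ decomposes into moves each of which is either supported away from a chosen cut or can be pushed off it by a conjugating slide, while simultaneously controlling the free loops permitted in an annular strand diagram, which touch neither boundary and therefore demand a separate argument to cut open or to absorb. Guaranteeing that the base-point ambiguity of the cut contributes \emph{only} conjugation, and nothing further, is where the genuine content of Belk and Matucci's theorem resides; by contrast the forward direction and the appeal to uniqueness of reduced forms are comparatively formal.
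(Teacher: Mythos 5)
This theorem is quoted in the paper from Belk and Matucci's work \cite{Belk2014} and no proof is given there, so there is no in-paper argument to compare against; your sketch has to be judged against the actual content of the cited result. Your forward direction is fine and is the standard argument: close up $S_k S_h S_k^{-1}$ and cancel $S_k^{-1}$ against $S_k$ across the seam using type I and type II moves, then invoke uniqueness of reduced forms. That part is essentially complete.

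The converse, however, has a structural gap beyond the bookkeeping you already flag. Cutting an annular strand diagram along an arc from the outer boundary to the central hole generally severs several edges at once, so the resulting planar diagram has many endpoints on the top and bottom of the square; it is \emph{not} a strand diagram in the sense of Definition \ref{defsd}, which requires a unique source and a unique sink, and hence does not directly name an element of $F$. Belk and Matucci get around this by working in the groupoid of $(m,n)$-strand diagrams (equivalently, by recording a cutting path together with the trees needed to close the cut ends back up to a single source and sink); without that apparatus, your phrase ``the element cut from a common annular diagram'' is not yet defined, and claim (i) — that moving the cut arc changes this element only by conjugation — cannot even be stated. A second point: type III moves merge parallel free loops, and in the cut-open picture this changes the number of strands crossing the cut; it is not a type I or type II move on $S$ and is not ``supported away from the cut,'' so your step (ii) as written does not cover it — this is precisely where the uniqueness of the reduced annular form (via termination and confluence of the rewriting system) is doing real work in \cite{Belk2014}, rather than a move-by-move transport past a chosen cut. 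So the plan identifies the right ingredients, but the converse as outlined would not assemble into a proof without first setting up the groupoid/cutting-path formalism and treating type III moves separately.
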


%
\begin{figure}[htp]
\centerline{\includegraphics[width=5in]{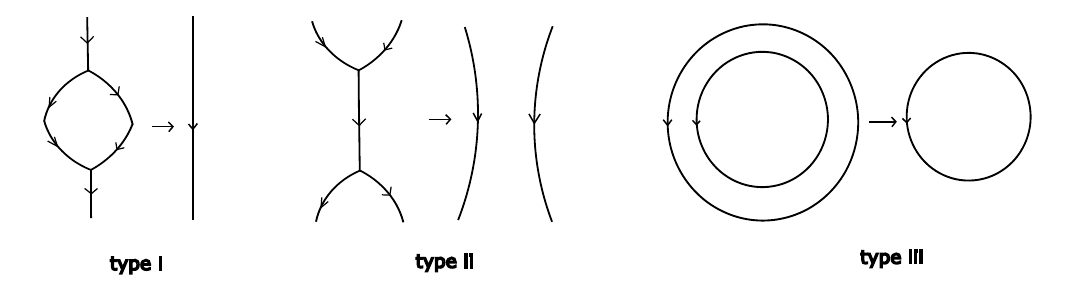}}
\caption{Three types of reductions. 
The bigon in type I move must bound a disk in the annulus. 
The two loops in type III move must be parallel free loops.}
\label{fig12}
\end{figure}

%
\section{A sequence of elements from distinct conjugacy classes that produce the same link.}
\label{sec3}
%
%
%
%
%
In this section, 
we construct a sequence of group elements and we prove Theorem \ref{thm1}.
%
%
%
Given a tree diagram $(T_{+}, T_{-})$,
then we can construct a tree diagram $(H_{+}, H_{-})$ from 
$(T_{+}, T_{-})$ as follows.
%
Let $(T_{a(+)}, T_{a(-)})$ be the reduced tree diagram representing 
$a = x_0^{3}x_2^{-1}x_0^{-3}.$
By attaching the root of the source tree $T_{+}$ 
to the first leaf of the source tree $T_{a(+)}$ 
from the left 
and attaching the root of $T_{-}$ 
to the first leaf of the target tree $T_{a(-)},$ 
we obtain a new tree diagram $(H_{+}, H_{-})$ 
(see Fig. \ref{fig15}).

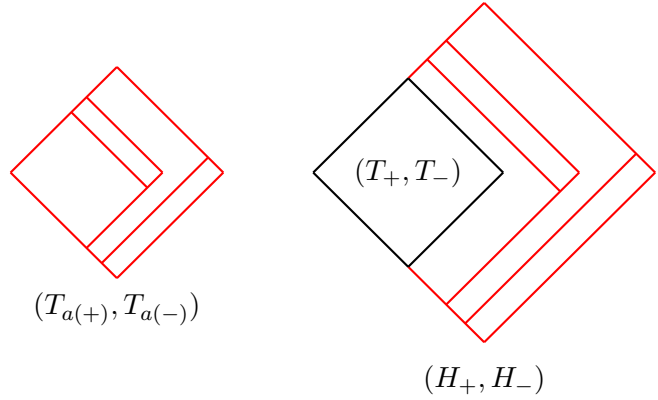
\begin{figure}[htp]
\begin{tikzpicture}[baseline=-0.65ex, thick, scale=0.2]
\draw [red] (0, 7) to (-7, 0);
\draw [red] (0, 7) to (7, 0);
\draw [red] (-1,-6) to (6, 1);
\draw [red] (0, -7) to (-7, 0);
\draw [red] (0, -7) to (7, 0);
\draw [red] (-2, -5) to (3,0);
\draw [red] (-2, 5) to (3,0);
\draw [red] (-3, 4) to (2, -1);
\draw (0, -9) node{$(T_{a(+)}, T_{a(-)})$};
\end{tikzpicture}\quad \quad \quad
\begin{tikzpicture}[baseline=-0.65ex, thick, scale=0.25]
\draw [red] (0, 9) to (-4, 5);
\draw [red] (0, 9) to (9, 0);
\draw [red] (-1,-8) to (8, 1);
\draw [red] (0, -9) to (-4, -5);
\draw [red] (0, -9) to (9, 0);
\draw [red] (-2, -7) to (5,0);
\draw [red] (-2, 7) to (5,0);
\draw [red] (-3, 6) to (4, -1);
\draw (-9, 0) to (-4, 5);
\draw (-9, 0) to (-4, -5);
\draw (1, 0) to (-4, 5);
\draw (1, 0) to (-4, -5);
\draw (-4, 0) node{$(T_{+}, T_{-})$};
\draw (0, -11) node{$(H_{+}, H_{-})$};
\end{tikzpicture}
\caption{Constructing $(H_{+}, H_{-})$ from $(T_{+}, T_{-})$.}
\label{fig15}
\end{figure}
%
%
\begin{lemma}
For any tree diagram $(T_+, T_-)$, 
we have that $(T_{+}, T_{-}) \neq (H_{+}, H_{-})$ as elements in $F$.
\label{lem31}
\end{lemma}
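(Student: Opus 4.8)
The plan is to pass from tree diagrams to the standard realization of $F$ as piecewise-linear homeomorphisms of $[0,1]$ with dyadic breakpoints and slopes in $2^{\mathbb{Z}}$, and to exploit the fact that every element of $F$ has only \emph{finitely many} breakpoints. Write $g$ for the element represented by $(T_+,T_-)$ and $H$ for the element represented by $(H_+,H_-)$. First I would pin down $a=x_0^{3}x_2^{-1}x_0^{-3}$. Since $\mathrm{supp}(x_2)=(3/4,1)$ and $x_0$ carries $[3/4,1]$ linearly onto $[1/2,1]$, we have $x_0^{3}(3/4)=1/8$, so $\mathrm{supp}(a)=x_0^{3}\bigl((3/4,1)\bigr)=(1/8,1)$. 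Hence $a$ is the identity on $[0,1/8]$ and is a nontrivial homeomorphism of $[1/8,1]$ fixing both endpoints; such a map cannot be linear, so $a$ has at least one breakpoint $p\in(1/8,1)$ (one may also note $a$ has slope $1/2\neq 1$ at $1$). In particular the reduced tree diagram $(T_{a(+)},T_{a(-)})$ has leftmost source leaf and leftmost target leaf both equal to the dyadic interval $[0,1/8]$.

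Next I would read off the effect of the grafting. Because $a$ carries $[0,1/8]$ identically onto $[0,1/8]$, attaching $T_+$ and $T_-$ at these two leftmost leaves produces exactly the homeomorphism $H$ that agrees with $a$ on $[1/8,1]$ and equals the rescaled copy of $g$ on $[0,1/8]$; concretely $H(t)=\tfrac18\,g(8t)$ for $t\in[0,1/8]$ and $H=a$ on $[1/8,1]$.

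Now suppose, for contradiction, that $H=g$ in $F$. Comparing the two descriptions forces $g=a$ on $[1/8,1]$ together with the self-similarity $g(t)=\tfrac18\,g(8t)$ on $[0,1/8]$. Iterating this relation, for each $n\geq 1$ and $t\in[8^{-n-1},8^{-n}]$ one gets $g(t)=8^{-n}a(8^{n}t)$, a rescaled copy of $a$. Since $a$ has a breakpoint at $p\in(1/8,1)$, each such interval contains a breakpoint of $g$, namely $8^{-n}p\in(8^{-n-1},8^{-n})$. Thus $g$ has infinitely many breakpoints accumulating at $0$, contradicting $g\in F$. Therefore $H\neq g$, that is, $(H_+,H_-)\neq(T_+,T_-)$.

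The step needing the most care is translating the combinatorial grafting operation into the PL formula $H(t)=\tfrac18\,g(8t)$ on $[0,1/8]$, and correctly locating the leftmost leaf of the reduced diagram of $a$ at $[0,1/8]$; once these are in place the contradiction is forced by finiteness of the breakpoint set and is uniform in $g$. I expect the main obstacle to be this bookkeeping rather than anything conceptual. (Note that a cheaper attempt, comparing slopes at $1$ where $H$ has slope $1/2$, does not suffice, since it fails precisely when $g$ happens to have slope $1/2$ at $1$; the accumulation of breakpoints is what handles all $g$ at once.)
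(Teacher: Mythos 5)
Your proof is correct, but it takes a genuinely different route from the paper's. The paper argues entirely combinatorially on tree diagrams: since elementary reductions only happen between adjoining leaves, $(H_+,H_-)$ is reduced exactly when $(T_+,T_-)$ is, so the reduced representative of $(H_+,H_-)$ has four more leaves than that of $(T_+,T_-)$, and uniqueness of reduced diagrams finishes the argument. You instead pass to the piecewise-linear model: you locate $\mathrm{supp}(a)=(1/8,1)$, translate the grafting into $H(t)=\tfrac18 g(8t)$ on $[0,1/8]$ and $H=a$ on $[1/8,1]$, and derive from $H=g$ a self-similarity whose iteration forces infinitely many breakpoints accumulating at $0$. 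That step is sound (your identification of the leftmost leaves of the reduced diagram of $a$ with $[0,1/8]$ is the crux, and it follows from $a$ being the identity on $[0,1/8]$ with a genuine breakpoint at $1/8$). Two remarks on the comparison. First, the numerical value $1/8$ depends on a choice of convention for realizing $x_0,x_2$ as PL maps, which the paper never fixes; under the opposite composition convention the leftmost leaf of $a$ is a different interval $[0,2^{-k}]$, but your argument survives verbatim with $8$ replaced by $2^{k}$, since all you use is that $a$ is the identity near $0$ but not identically. Second, your accumulation-of-breakpoints trick is more than is needed: once you have $H=a$ on $[1/8,1]$ and $H(t)=\tfrac18 g(8t)$ on $[0,1/8]$, the breakpoint set of $H$ is the disjoint union of the rescaled breakpoints of $g$, the breakpoints of $a$ in $[1/8,1)$, and possibly $1/8$ itself, so $H$ has strictly more breakpoints than $g$ and cannot equal it; that shorter count is precisely the PL shadow of the paper's leaf count. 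What your version buys is independence from verifying the ``reduced iff reduced'' claim; what the paper's buys is staying inside the diagrammatic formalism it has already set up.
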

\begin{proof}
Note that the elementary reductions occur between two adjoining leaves in a tree diagram. 
From the construction, 
we can regard $(T_{+}, T_{-})$ as a subgraph of $(H_{+}, H_{-})$. 
By analysis on the tree diagram, 
we see that $(H_{+}, H_{-})$ is reduced if and only if $(T_{+}, T_{-})$ is reduced. 
Therefore, the reduced tree diagram equivalent to $(H_{+}, H_{-})$ 
has four more leaves then that of $(T_{+}, T_{-})$, 
from which the lemma follows. 
\end{proof}

%
%
%
%
We start from a reduced tree diagram $h_{1}=(T_+, T_-)$ 
and construct a reduced tree diagram $h_{2}=(H_+, H_-)$ as above. 
Then we can repeat this process and 
construct new reduced tree diagram $h_{i+1}$ from $h_{i}$ inductively where $i \in \mathbb{N}.$
 
%
%
\begin{lemma}
The elements $\{h_i \}_{i = 1}^{\infty}$ are all distinct in $F.$
\label{lem32}
\end{lemma}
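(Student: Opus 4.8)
The statement to prove is Lemma \ref{lem32}: that the elements $\{h_i\}_{i=1}^\infty$, obtained by iterating the construction $h_{i+1} = (H_+, H_-)$ from $h_i = (T_+, T_-)$, are pairwise distinct in $F$. The plan is to track a numerical invariant through the iteration, namely the number of leaves in the reduced tree diagram representing each $h_i$. Lemma \ref{lem31} already does the essential local work: it establishes that the construction sends a reduced tree diagram to a reduced tree diagram, and that passing from $(T_+, T_-)$ to $(H_+, H_-)$ increases the leaf count by exactly four (the four extra leaves coming from the fixed diagram $(T_{a(+)}, T_{a(-)})$ of $a = x_0^3 x_2^{-1} x_0^{-3}$, minus the leaf that gets absorbed by the attachment). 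The key observation I would use is that the number of leaves in the \emph{reduced} tree diagram is a well-defined invariant of a group element, since each equivalence class has a unique reduced representative (as recalled in Section \ref{sec2.1}).

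First I would set $\ell_i$ to denote the number of leaves in the reduced tree diagram representing $h_i$. By the construction and the conclusion of Lemma \ref{lem31}, since each $h_i$ is chosen to be reduced and $h_{i+1}$ is again reduced with four more leaves, we have the recurrence $\ell_{i+1} = \ell_i + 4$ for every $i \in \mathbb{N}$. Hence $\ell_i = \ell_1 + 4(i-1)$ is a strictly increasing sequence of positive integers. Since the leaf count of the reduced representative is an invariant of the group element, two elements with different reduced leaf counts cannot be equal in $F$. Therefore $h_i \neq h_j$ whenever $i \neq j$, because $\ell_i \neq \ell_j$.

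The only point requiring a little care — and the step I expect to be the main obstacle — is justifying that the recurrence $\ell_{i+1} = \ell_i + 4$ holds at \emph{every} stage of the iteration, not just the first. This needs the fact, supplied by Lemma \ref{lem31}, that reducedness is preserved: the construction attaches $(T_+, T_-)$ into the first leaf of the reduced diagram for $a$, and the argument in Lemma \ref{lem31} shows no new elementary reduction is created at the attachment site precisely because the reductions only occur between adjoining leaves and the embedded copy of $(T_+, T_-)$ sits as a subgraph that does not interact with the rest. I would make this into a clean induction: assuming $h_i$ is reduced, Lemma \ref{lem31} guarantees $h_{i+1}$ is reduced with exactly four more leaves, which simultaneously validates the recurrence and keeps the inductive hypothesis (reducedness) in force for the next step. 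Once the induction is set up, the conclusion that $\{\ell_i\}$ is strictly increasing, and hence that the $h_i$ are distinct, is immediate.
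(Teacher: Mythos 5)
Your proposal is correct and follows essentially the same route as the paper: the paper's own proof simply invokes Lemma \ref{lem31} to note that the number of leaves of the reduced tree diagram representing $h_i$ strictly increases with $i$, which is exactly the invariant you track. Your version merely spells out the induction and the recurrence $\ell_{i+1} = \ell_i + 4$ more explicitly than the paper does.
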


\begin{proof}
%
%
%
%
From the proof of Lemma \ref{lem31}, 
we see that the number of leaves of the reduced tree diagram representing $h_i$ increases 
as $i$ increases, 
and the lemma follows.
%
\end{proof}

\begin{proposition}
For the sequence $\{h_i \}_{i = 1}^{\infty},$ constructed above,
we have that $\mathcal{L}_{h_{i}} = \mathcal{L}_{h_1} $ holds for all $i \in \mathbb{N}.$ 
\label{prop33}
\end{proposition}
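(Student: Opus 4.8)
The plan is to prove the stronger one-step statement: for an arbitrary reduced tree diagram $(T_+,T_-)$ with $a$-wrapping $(H_+,H_-)$, one has $\mathcal{L}_{(H_+,H_-)}=\mathcal{L}_{(T_+,T_-)}$. Since by construction $h_{i+1}$ is obtained from $h_i$ by precisely this wrapping, the proposition follows immediately by induction on $i$, the base case $i=1$ being trivial. So from now on I would fix one $(T_+,T_-)$ and compare the two links.

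To prove the one-step statement I would use the second (medial) form of Jones's construction from Section \ref{sec2.2}. Building the link from $(H_+,H_-)$, the inserted diagram $(T_+,T_-)$ contributes a tangle $\tau$, while the fixed five-leaf diagram of $a=x_0^{3}x_2^{-1}x_0^{-3}$, together with the root closure, contributes a fixed ``frame'' surrounding $\tau$. The key geometric point is that $(T_+,T_-)$ is attached at the \emph{leftmost} leaf of $a$, so the strands of the frame meet $\tau$ only in one localized column, namely along the arcs emanating from the roots of $T_+$ and $T_-$. Consequently $\mathcal{L}_{(H_+,H_-)}$ is the connected sum of the closure of $\tau$, which is $\mathcal{L}_{(T_+,T_-)}$, with the link $\mathcal{L}_a$ produced by $a$ alone, possibly together with split components.

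It then remains to show that the frame is trivial. Here I would argue directly on the fixed diagram of $a$: the crossings coming from the $x_0^{3}$ and $x_0^{-3}$ factors occur in mirror pairs and are removed by Reidemeister II moves, while the remaining caret crossings are unknotting kinks that disappear under Reidemeister I, exactly as in the example at the end of Section \ref{sec2.2}, where adding $x_0$ to a leaf realizes a connected sum with a trivial knot. This identifies $\mathcal{L}_a$ with the unknot and leaves only trivial split components; since both connected summing with an unknot and adjoining trivial components are ignored by the map $J$, one concludes $\mathcal{L}_{(H_+,H_-)}=\mathcal{L}_{(T_+,T_-)}$.

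I expect the main obstacle to be the rigorous justification of the connected-sum reduction, i.e.\ verifying that the sequence of Reidemeister moves simplifying the frame can be carried out without ever interacting with the inserted tangle $\tau$. The cleanest way to guarantee this is to make precise that, because the attachment is at the single leftmost leaf, $\tau$ is joined to the frame along a single bridge, so that the entire frame lives in a disk meeting $\tau$ only in one boundary arc; all the frame simplifications are then supported in that disk and leave $\tau$, and hence $\mathcal{L}_{(T_+,T_-)}$, untouched. A careful picture tracking each crossing of the five-leaf diagram of $a$ through these moves would complete the argument.
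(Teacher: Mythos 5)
Your proposal is correct and follows essentially the same route as the paper: both reduce to the one-step claim that wrapping a tree diagram by $a=x_0^{3}x_2^{-1}x_0^{-3}$ does not change the associated link, verify it by observing that the new crossings are confined to the fixed five-leaf diagram of $a$ (joined to the rest along only the two arcs at the roots of $T_{\pm}$) and are removed by Reidemeister moves, and then conclude by induction; the paper carries out this local simplification pictorially in Fig.~\ref{fig16}. Your explicit connected-sum phrasing is a clean way to package what the figure shows, though note that your ``mirror pairs from $x_0^{3}$ and $x_0^{-3}$'' heuristic should ultimately be replaced by a direct check on the reduced diagram of $a$, since the factorization is not visible in the reduced tree pair.
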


\begin{proof}
Recall the construction described in Section \ref{sec2.2}. For any $k \in \mathbb{N},$ 
we construct 
$\mathcal{L}_{h_{k+1}}$ from $\mathcal{L}_{h_{k}}$ as in Fig. \ref{fig16} 
and we see that after a few Reidemeister moves, 
$\mathcal{L}_{h_{k+1}} = \mathcal{L}_{h_{k}}.$ 
Inductively, we have $\mathcal{L}_{h_{i}} = \mathcal{L}_{h_{1}}$ for all $i \in \mathbb{N}.$ 
\end{proof}

\begin{figure}[htp]
\centerline{\includegraphics[width=4in]{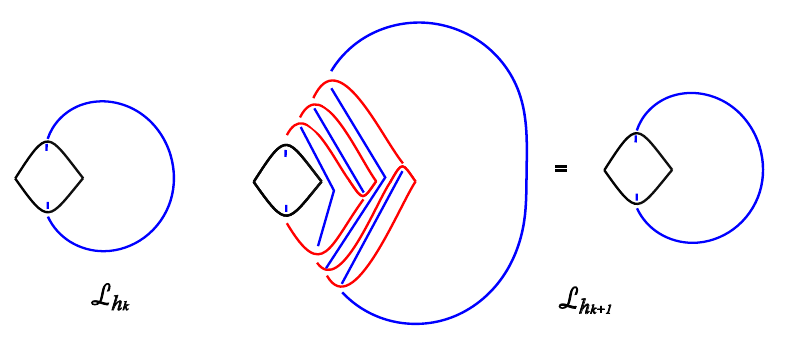}}
\caption{$\mathcal{L}_{h_{k}} = \mathcal{L}_{h_{k+1}}$}
\label{fig16}
\end{figure}

\begin{proposition}
The elements in the sequence $\{h_{i} \}_{i=1}^{\infty}$ 
are all from distinct conjugacy classes of $F.$ 
\label{prop34}
\end{proposition}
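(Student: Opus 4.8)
The plan is to reduce the statement to the uniqueness of reduced annular strand diagrams. By Theorem \ref{conj}, two elements $h_i$ and $h_j$ lie in the same conjugacy class of $F$ if and only if $r\mathcal{A}_{h_i}=r\mathcal{A}_{h_j}$, so it suffices to prove that the reduced annular strand diagrams $r\mathcal{A}_{h_i}$ are pairwise distinct. Since every annular strand diagram is equivalent to a \emph{unique} reduced one \cite{Belk2014}, any combinatorial quantity read off from $r\mathcal{A}_h$ is automatically a conjugacy invariant. I would work with $v(h)$, the number of trivalent vertices of $r\mathcal{A}_h$, and show that $v(h_{i+1})>v(h_i)$ for every $i$; this strict monotonicity forces the $r\mathcal{A}_{h_i}$ to be pairwise distinct and finishes the proof. (I note in passing that coarser invariants do not suffice here: a germ computation at the endpoints shows $a=x_0^3x_2^{-1}x_0^{-3}$ is trivial near $0$ and has $\log_2 a'(1^-)=-1$, so every $h_{i+1}$ has the same image as $h_1\cdot(0,-1)$ in the abelianization $F^{ab}\cong\mathbb{Z}^2$ beyond the first step, and the abelianization is eventually constant along the sequence. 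Hence the finer invariant coming from Belk and Matucci's diagrams is genuinely needed.)

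The first real step is to describe $\mathcal{A}_{h_{i+1}}$ in terms of $\mathcal{A}_{h_i}$. By construction $h_{i+1}$ is obtained by grafting the tree pair of $h_i$ onto the leftmost leaf of the reduced tree pair of $a$. Computing supports, $\operatorname{supp}(x_2^{-1})=[3/4,1]$ and $x_0^3([3/4,1])=[1/8,1]$, so $a$ is the identity on the interval $[0,1/8]$ corresponding to that leftmost leaf, while it is nontrivial near $1$. Consequently the grafted copy of $h_i$ is supported inside $[0,1/8]$, has support disjoint from $\operatorname{supp}(a)$, and commutes with $a$; this is exactly the reason the leftmost leaf (of depth $3$, produced by $x_0^3$) is the right place to graft. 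After closing the unit square into the annulus, the strands produced by the grafted copy of $h_i$ form an inner subdiagram meeting the strands produced by $a$ only along the boundary of the region lying over the grafting leaf. I would prove that, up to equivalence, $\mathcal{A}_{h_{i+1}}$ decomposes as $r\mathcal{A}_{h_i}$ nested inside a fixed gadget $G_a$ coming from $a$, where $G_a$ contributes a constant \emph{positive} number of trivalent vertices that cannot be cancelled, so that $v(h_{i+1})=v(h_i)+v(G_a)>v(h_i)$.

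The heart of the argument, and the step I expect to be the main obstacle, is to show that the reductions of $\mathcal{A}_{h_{i+1}}$ never straddle the junction over the grafting leaf, so that reducing $\mathcal{A}_{h_{i+1}}$ splits into reducing the inner part (yielding $r\mathcal{A}_{h_i}$) and reducing the outer gadget $G_a$ independently. I would establish this by a local analysis of the three moves of Fig. \ref{fig12}: because $a$ acts trivially at the grafting leaf, the two strands bounding the grafting region run monotonically around the annulus with no bigon bounding a disk, no reducible split--merge pair, and no parallel free loops crossing the junction, so none of the three reductions can use edges from both sides. This is precisely where the specific choice $a=x_0^3x_2^{-1}x_0^{-3}$ enters: it is nontrivial (so $G_a$ genuinely contributes vertices), its trivial germ at $0$ isolates the grafted part, and its reduced tree pair keeps the leftmost leaf in an irreducible position after closure, complementing the leaf-counting argument of Lemma \ref{lem31}. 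Once the no-straddling claim is proved, the inductive identity $v(h_{i+1})=v(h_i)+v(G_a)$ and Theorem \ref{conj} complete the argument; I would confirm the setup by drawing $r\mathcal{A}_{h_1}$ and $r\mathcal{A}_{h_2}$ explicitly and checking that the gadget $G_a$ appears in $r\mathcal{A}_{h_2}$ exactly as claimed.
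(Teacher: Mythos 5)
Your proposal follows essentially the same route as the paper: both invoke Theorem \ref{conj} together with the uniqueness of reduced annular strand diagrams, and both rest on the same key claim that $\mathcal{A}_{h_{i+1}}$ reduces to $r\mathcal{A}_{h_i}$ together with a fixed non-trivial reduced piece coming from $a$, with no reductions interacting across the grafting junction. The only difference is cosmetic: you take the number of trivalent vertices of the reduced diagram as the strictly increasing conjugacy invariant, whereas the paper shows $r\mathcal{A}_{h_{i+1}}=r\mathcal{A}_{h_i}\sqcup\mathcal{E}$ for a fixed connected non-trivial $\mathcal{E}$ and counts connected components; either count finishes the argument once the splitting is established.
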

\begin{proof}{}
%
%
%
We first show that $h_1$ and $h_2$ 
are in different conjugacy classes. By Theorem \ref{conj}, 
it is equivalent to show that the reduced annular strand diagrams 
 $r\mathcal{A}_{h_1}$ and $r\mathcal{A}_{h_2}$ produced 
 from $h_1$ and $h_2$ respectively, 
 are different.

From Fig. \ref{fig17}, 
we see that $\mathcal{A}_{h_{2}}$ can be reduced to $\mathcal{A}_{h_1} \sqcup \mathcal{E}$, 
a disjoint union of $\mathcal{A}_{h_1}$ and 
a non-trivial reduced annular strand diagram $\mathcal{E}.$ 
%
We perform reductions to $\mathcal{A}_{h_1}$ 
to obtain a reduced annular strand diagram $r\mathcal{A}_{h_1}.$ 
Since $r\mathcal{A}_{h_1} \sqcup \mathcal{E}$ is reduced, 
we have
$r\mathcal{A}_{h_2} = r\mathcal{A}_{h_1} \sqcup \mathcal{E}$. 
It is obvious that $r\mathcal{A}_{h_2}$ has one more component 
than $r\mathcal{A}_{h_1}$ as graphs, 
and thus $h_1$ and $h_2$ belong to different conjugacy classes.
Inductively, we see that the number of components of $r\mathcal{A}_{h_i}$ increases as $i$ increases. 
Therefore, $h_i$'s are all in different conjugacy classes of $F$. 
\end{proof}

\begin{figure}[htp]
\centerline{\includegraphics[width=4.5in]{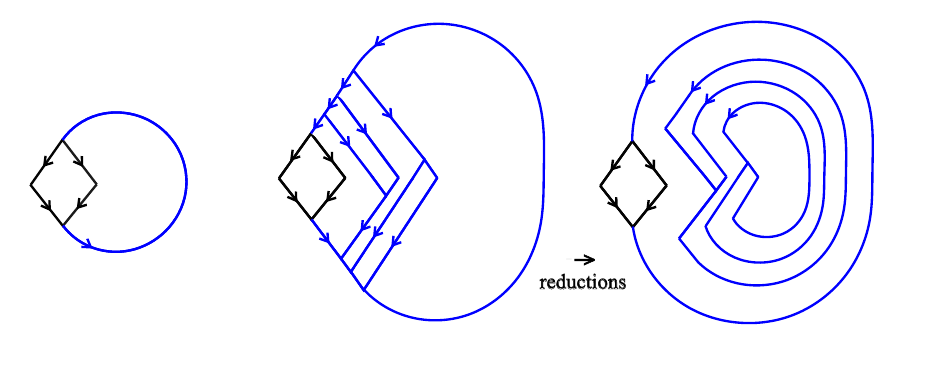}}
\caption{$r\mathcal{A}_{h_2} = r\mathcal{A}_{h_1} \sqcup \mathcal{E}$}
\label{fig17}
\end{figure}

Now we are ready to prove the main theorem of this section.
\begin{theorem}
For any unoriented link $L $ there exist elements 
$\{h_{i}\}_{i = 1}^{\infty}$ in $F$ such that 
\begin{enumerate}
\item $\mathcal{L}_{h_i} = L$ for all $i \in \mathbb{N},$
\item $h_i$ are all in distinct conjugacy classes. 
\end{enumerate}
\label{thm1}
\end{theorem}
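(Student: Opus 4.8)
The plan is to assemble Theorem \ref{thm1} directly from the machinery built up in the preceding propositions, applied to a well-chosen starting element. By Jones's Theorem \ref{jones}, the map $J$ is surjective, so for the given unoriented link $L$ there exists some $g \in F$ with $\mathcal{L}_g = L$. After performing all available elementary reductions, I may assume $g$ is represented by a reduced tree diagram $(T_+, T_-)$. Setting $h_1 := g$, I obtain a concrete reduced tree diagram whose associated link is exactly $L$.

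Next I would invoke the inductive construction preceding the theorem: from $h_1$ I build $h_2, h_3, \dots$ by repeatedly attaching the element $a = x_0^{3}x_2^{-1}x_0^{-3}$ as described in Fig. \ref{fig15}, producing the sequence $\{h_i\}_{i=1}^\infty$. Then the three established results finish the argument with essentially no further work. Proposition \ref{prop33} gives $\mathcal{L}_{h_i} = \mathcal{L}_{h_1} = L$ for every $i \in \mathbb{N}$, which is conclusion (1). Proposition \ref{prop34} gives that the $h_i$ lie in pairwise distinct conjugacy classes of $F$, which is conclusion (2). (Lemma \ref{lem32}, that the $h_i$ are distinct as group elements, is subsumed by Proposition \ref{prop34}, since elements in distinct conjugacy classes are certainly distinct, but it is worth citing for clarity.)

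In short, the theorem is a packaging statement: the real content lives in the surjectivity of $J$ and in the two propositions about the sequence $\{h_i\}$. The only genuine verification is the reduction to the reduced-diagram case for $h_1$, which is harmless since passing to the reduced tree diagram does not change the element of $F$ and hence does not change $\mathcal{L}_{h_1}$.

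The step I would watch most carefully is the interface between the propositions and the hypotheses of the theorem. Propositions \ref{prop33} and \ref{prop34} are stated for the sequence "constructed above," which starts from an arbitrary reduced tree diagram $h_1 = (T_+, T_-)$; I must make sure the $h_1$ extracted from Jones's theorem fits that template (i.e. is genuinely reduced) before the propositions apply verbatim. This is the only place where a reader might object that the propositions were proved for a generic starting diagram rather than for one prescribed by $L$, so I would state explicitly that the construction and both propositions hold for any reduced $h_1$, and that $L$ merely fixes the choice of $h_1$. Everything downstream is then a direct citation.
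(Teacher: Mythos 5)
Your proposal is correct and follows exactly the same route as the paper's own proof: obtain a reduced $h_1$ with $\mathcal{L}_{h_1}=L$ from Theorem \ref{jones}, build the sequence via the construction of Fig. \ref{fig15}, and conclude by citing Propositions \ref{prop33} and \ref{prop34}. Your extra care about verifying that $h_1$ is reduced before applying the propositions is a reasonable clarification but does not change the argument.
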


\begin{proof}{}
By Theorem \ref{jones}, there is a reduced tree diagram $h_1=(T_+, T_-)$ 
for which $\mathcal{L}_{h_1} = L$. 
We then construct a sequence $\{h_{i}\}_{i = 1}^{\infty}$ as in Fig. \ref{fig15}. 
The remaining part of the proof follows directly 
from Proposition \ref{prop33} and Proposition \ref{prop34}. 
\end{proof}
 
%
\section{Distinct links constructed from the same conjugacy class of $F$}
\label{sec4}

As a problem complementary to Theorem \ref{thm1}, 
we want to know whether we could construct all knots and links from elements 
in one conjugacy class. 
A weaker version of the task is to find out 
examples of sequences of distinct links constructed from elements 
in the same conjugacy class.
Here we study the conjugacy classes of the generators $x_0$ and $x_1$. 
Note that other generators $x_i$ for $i\geq 2$ are all conjugate to $x_1$. 

\begin{theorem}
\label{thm2}
For $i=0, 1$, the elements in the conjugacy class of $x_i$ produce 
infinitely many distinct knots and links. 
More precisely, they produce $2$-bridge links $C(1, 1, \cdots, 1)$ 
with even number of $1$s.
\end{theorem}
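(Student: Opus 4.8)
The plan is to play the two governing facts against each other: by the Belk--Matucci criterion (Theorem \ref{conj}) membership in the conjugacy class of $x_i$ is detected \emph{only} by the reduced annular strand diagram $r\mathcal{A}_{x_i}$, whereas Jones's map $J$ depends on the actual tree-diagram representative. So for each $k$ I would exhibit an explicit tree diagram $g_k$ with $r\mathcal{A}_{g_k}=r\mathcal{A}_{x_i}$ (hence $g_k$ conjugate to $x_i$) but with $\mathcal{L}_{g_k}=C(1^{2k})$, take $x_i$ itself as the first term, and finally show that these links are pairwise distinct.

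First I would compute $r\mathcal{A}_{x_0}$ and $r\mathcal{A}_{x_1}$; each is the simplest reduced annular strand diagram, carrying a single split and a single merge winding once around the central hole, and a direct Tait-graph computation identifies $\mathcal{L}_{x_i}$ as the base member $C(1,1)$ (the Hopf link). To produce the remaining conjugates I would start from this reduced annular diagram and apply inverse reductions, inserting $k-1$ bigons (reverse type-I moves, each a split immediately followed by a merge bounding a disk), and then cut the annulus open along a ray to recover a strand diagram, i.e. a tree diagram $g_k$. The crucial point is to position the cutting ray so that it passes \emph{through} the disks bounded by the inserted bigons: in the annulus each bigon is disk-bounding and removable by a type-I reduction, so $\mathcal{A}_{g_k}$ reduces back to $r\mathcal{A}_{x_i}$ and Theorem \ref{conj} makes $g_k$ conjugate to $x_i$; but once the annulus is severed along the ray the bigon disks are cut, the bigons are no longer disk-bounding in the square, and they persist as genuine caret pairs of the tree diagram $g_k$.

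Next I would run Jones's construction on $g_k$. Each surviving bigon contributes one split and one merge, hence one positively signed edge (from $T_+$) and one negatively signed edge (from $T_-$) to the Tait graph $\Gamma(g_k)$; arranging the insertions around the annulus so that these edges assemble into the standard series--parallel chain realizing the continued fraction $[1,1,\dots,1]$ yields exactly $2k$ crossings and the rational number $F_{2k+1}/F_{2k}$, where $F_j$ denotes the $j$-th Fibonacci number. Thus $\mathcal{L}_{g_k}$ is the two-bridge link $\mathfrak{b}(F_{2k+1},F_{2k})=C(1^{2k})$, with an even number of $1$s as required. Since the determinants $F_{2k+1}$ are strictly increasing, the classification of two-bridge links ($\mathfrak{b}(p,q)\cong\mathfrak{b}(p',q')$ forces $p=p'$) gives $\mathcal{L}_{g_k}\neq\mathcal{L}_{g_{k'}}$ for $k\neq k'$, so the conjugacy class of $x_i$ produces infinitely many distinct links.

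I expect the main obstacle to be the simultaneous control of the two invariants, which is really the bookkeeping in the middle step. Inserting bigons and recutting is precisely the freedom that fixes $r\mathcal{A}$, so conjugacy is automatic by construction; the genuine work is verifying that an \emph{angular} bigon insertion is read off, after cutting, as a correctly placed $+/-$ pair of caret edges in the \emph{linear} tree diagram, so that the signed Tait graph grows by one continued-fraction step $[1,\dots,1]\mapsto[1,\dots,1,1]$ rather than lengthening a single twist region (which would instead give the torus-link family $C(2k)$). Translating this signed graph into its Conway fraction and confirming the Fibonacci pattern is the delicate part; by contrast, once the fractions $F_{2k+1}/F_{2k}$ are in hand, the conjugacy of the $g_k$ and the distinctness of the links are both immediate.
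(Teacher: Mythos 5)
Your overall strategy runs in the opposite direction from the paper's, and the reversal is where the trouble lies. The paper never needs the hard direction of Theorem \ref{conj} here: it simply writes down explicit positive elements $g_n$ and forms the literal conjugates $g_nx_0g_n^{-1}$, so membership in the conjugacy class is free, and all the work goes into computing the tree diagram of $g_nx_0g_n^{-1}$ (a caret added to the leftmost leaf of $T_n$ on top and to the rightmost leaf on the bottom) and reading off its Tait graph as the chain giving $C(1,1,\dots,1)$. You instead propose to start from $r\mathcal{A}_{x_i}$, insert bigons by inverse type-I moves, and ``cut the annulus open along a ray'' passing through the bigon disks to recover a tree diagram $g_k$. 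That cutting step does not work as described: a ray that passes through the disks bounded by $k-1$ bigons crosses at least $2(k-1)+1$ edges of the annular diagram, and the cut-open object then has many endpoints on the top and bottom of the square, so it is not a strand diagram in the sense of Definition \ref{defsd} and does not directly determine an element of $F$. The Belk--Matucci correspondence goes from strand diagrams to annular ones by closure; to go back you would have to exhibit an actual strand diagram whose closure is equivalent to your decorated annular diagram, which is essentially the explicit construction you were trying to avoid. So the existence of your $g_k$ is not established by the proposal.

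The second half also leaves the real content unproved: the assertion that the surviving bigons ``assemble into the standard series--parallel chain realizing $[1,1,\dots,1]$'' is exactly the computation that constitutes the theorem, and you flag it as delicate without carrying it out. (Inserting a split-then-merge pair contributes one caret to each of $T_+$ and $T_-$, but whether the resulting signed Tait graph extends the continued-fraction chain or merely lengthens a twist region depends entirely on the placement, which is the point at issue.) A smaller error: $\mathcal{L}_{x_0}$ is the unknot, not the Hopf link $C(1,1)$ --- its four Tait-graph crossings cancel by Reidemeister II and I moves; the Hopf link first appears at $g_1x_0g_1^{-1}$. Your closing argument that the links $C(1^{2k})$ are pairwise distinct (increasing determinants $F_{2k+1}$ together with the classification of $2$-bridge links) is correct and is a point the paper leaves implicit. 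To repair the proof, replace the annular-diagram surgery by the paper's explicit conjugates and do the Tait-graph computation directly.
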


\subsection{The proof for the case of $x_0$}
The proof is given by concrete constructions.   
For a reduced tree diagram, when the target tree has the form in Fig. \ref{f1}, 
we say it is a {\it positive element} in $F$ \cite{Aiello:2021aa}. 
The set of all positive elements is denoted by $F_{+}$ which form a monoid.  

\begin{figure}[htp]
\begin{center}
\begin{tikzpicture}[baseline=-0.65ex, thick, scale=0.4]
\draw (0, -5) to (-5, 0);
\draw (0, -5) to (5, 0);
\draw (1, -4) to (-3, 0);
\draw (2, -3) to (-1, 0);
\draw (4, -1) to (3, 0);
\draw   (2, -1) node{$\cdots$};
\end{tikzpicture}
\end{center}
 \caption{The target tree of a positive element.}
 \label{f1}
 \end{figure}
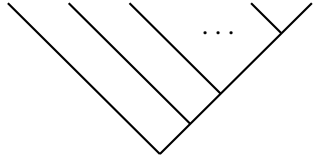

Then for any $g = (T_{+}, T_{-}) \in F_{+}$, 
the source (resp. target) tree of the tree diagram representing $gx_0g^{-1}$ is obtained from $T_{+}$ 
by attaching a caret on its leftmost (resp. rightmost) leaf, 
which is illustrated in Fig. \ref{f2}.

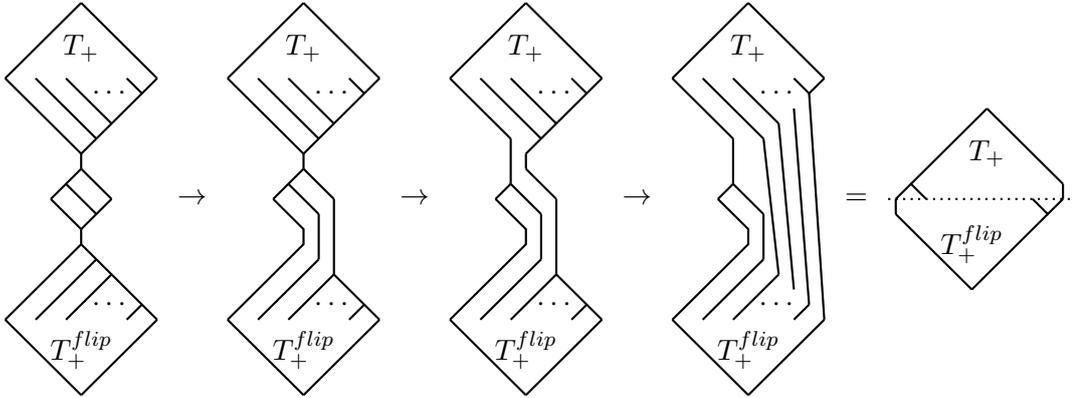
\begin{figure}[htp]
\begin{center}
\begin{tikzpicture}[baseline=-0.65ex, thick, scale=0.2]
\draw (0, 5) to (-5, 0);
\draw (0, 5) to (5, 0);
\draw (0, -5) to (-5, 0);
\draw (0, -5) to (5, 0);
\draw (1, -4) to (-3, 0);
\draw (2, -3) to (-1, 0);
\draw (4, -1) to (3, 0);
\draw   (2, -1) node{$\cdots$};
\draw   (0, 2) node{$T_+$};
\draw (0, -6) to (-2,-8);
\draw (0, -6) to (2, -8);
\draw (0, -10) to (-2,-8);
\draw (0, -10) to (2, -8);
\draw (-1, -7) to (1, -9);
\draw (0, -6) to (0, -5);
\draw (0, -11) to (-5, -16);
\draw (0, -11) to (5, -16);
\draw (1, -12) to (-3, -16);
\draw (2, -13) to (-1, -16);
\draw (4, -15) to (3, -16);
\draw   (2, -15) node{$\cdots$};
\draw   (0, -18) node{$T_+^{flip}$};
\draw (0, -21) to (-5, -16);
\draw (0, -21) to (5, -16);
\draw (0, -11) to (0, -10);
\end{tikzpicture}
\begin{tikzpicture}[baseline=-0.65ex, thick, scale=0.2]
\draw (0, -8) node{$\to$};
\end{tikzpicture}
\begin{tikzpicture}[baseline=-0.65ex, thick, scale=0.2]
\draw (0, 5) to (-5, 0);
\draw (0, 5) to (5, 0);
\draw (0, -5) to (-5, 0);
\draw (0, -5) to (5, 0);
\draw (1, -4) to (-3, 0);
\draw (2, -3) to (-1, 0);
\draw (4, -1) to (3, 0);
\draw   (2, -1) node{$\cdots$};
\draw   (0, 2) node{$T_+$};
\draw (0, -6) to (-2,-8);
\draw (0, -6) to (2, -8);
\draw (-2, -8) to (0,-10) to (0, -11) to (-5,-16);
\draw (-1, -7) to (1, -9);
\draw (0, -6) to (0, -5);
\draw (1, -12) to (1, -9);
\draw (2, -13) to (5, -16);
\draw (1, -12) to (-3, -16);
\draw (2, -13) to (-1, -16);
\draw (4, -15) to (3, -16);
\draw   (2, -15) node{$\cdots$};
\draw   (0, -18) node{$T_+^{flip}$};
\draw (0, -21) to (-5, -16);
\draw (0, -21) to (5, -16);
\draw (2, -8) to (2, -13);
\end{tikzpicture}
\begin{tikzpicture}[baseline=-0.65ex, thick, scale=0.2]
\draw (0, -8) node{$\to$};
\end{tikzpicture}
\begin{tikzpicture}[baseline=-0.65ex, thick, scale=0.2]
\draw (0, 5) to (-5, 0);
\draw (0, 5) to (5, 0);
\draw (-1, -4) to (-5, 0);
\draw (0, -5) to (5, 0);
\draw (1, -4) to (-3, 0);
\draw (2, -3) to (-1, 0);
\draw (4, -1) to (3, 0);
\draw (-1, -4) to (-1, -7);
\draw   (2, -1) node{$\cdots$};
\draw   (0, 2) node{$T_+$};
\draw (-1, -7) to (-2,-8);
\draw (0, -6) to (2, -8);
\draw (-2, -8) to (0,-10) to (0, -11) to (-5,-16);
\draw (-1, -7) to (1, -9);
\draw (0, -6) to (0, -5);
\draw (1, -12) to (1, -9);
\draw (2, -13) to (5, -16);
\draw (1, -12) to (-3, -16);
\draw (2, -13) to (-1, -16);
\draw (4, -15) to (3, -16);
\draw   (2, -15) node{$\cdots$};
\draw   (0, -18) node{$T_+^{flip}$};
\draw (0, -21) to (-5, -16);
\draw (0, -21) to (5, -16);
\draw (2, -8) to (2, -13);
\end{tikzpicture}
\begin{tikzpicture}[baseline=-0.65ex, thick, scale=0.2]
\draw (0, -8) node{$\to$};
\end{tikzpicture}
\begin{tikzpicture}[baseline=-0.65ex, thick, scale=0.2]
\draw (0, 5) to (-5, 0);
\draw (0, 5) to (5, 0);
\draw (-1, -4) to (-5, 0);
\draw (4, -1) to (5, 0);
\draw (1, -4) to (-3, 0);
\draw (2, -3) to (-1, 0);
\draw (4, -1) to (3, 0);
\draw (-1, -4) to (-1, -7);
\draw (4, -1) to (5, -16);
\draw   (2, -1) node{$\cdots$};
\draw   (0, 2) node{$T_+$};
\draw (-1, -7) to (-2,-8);
\draw (-2, -8) to (0,-10) to (0, -11) to (-5,-16);
\draw (-1, -7) to (1, -9);
\draw (1, -12) to (1, -9);
\draw (1, -12) to (-3, -16);
\draw (2, -13) to (-1, -16);
\draw (4, -15) to (3, -16);
\draw   (2, -15) node{$\cdots$};
\draw   (0, -18) node{$T_+^{flip}$};
\draw (0, -21) to (-5, -16);
\draw (0, -21) to (5, -16);
\draw (1, -4) to (2, -13);
\draw (2, -3) to (3, -14);
\draw (3, -2) to (4, -15);
\end{tikzpicture}
\begin{tikzpicture}[baseline=-0.65ex, thick, scale=0.2]
\draw (0, -8) node{$=$};
\end{tikzpicture}
\begin{tikzpicture}[baseline=-0.65ex, thick, scale=0.2]
\draw (1, -2) to (-4, -7);
\draw (1, -2) to (6, -7);
\draw   (1, -5) node{$T_+$};
\draw   (0, -11) node{$T_+^{flip}$};
\draw (0, -14) to (-5, -9);
\draw (0, -14) to (5, -9);
\draw  (-5, -8) to (-4, -7) to (-3, -8);
\draw (4, -8) to (5, -9) to (6, -8);
\draw (-5, -8) to (-5, -9);
\draw (6, -8) to (6, -7);
\draw [dotted] (-5.5, -8) to (6.5, -8);
\end{tikzpicture}
 \caption{The tree diagram representing $gx_0g^{-1}$. \label{f2}}
 \end{center}
 \end{figure}

Now we consider a sequence of tree diagrams $\{g_n\}_{n=0}^{\infty}\subset F_{+}$ 
whose source tree $T_n$ is defined to be the tree with $2n+2$ leaves 
as shown in Fig. \ref{f3}. 
More precisely, let $T_0$ be the unique binary tree with one root and two leaves, 
and then for $n\in \mathbb{N}$, $T_{n}$ is obtained from $T_{n-1}$ 
by attaching the root of the tree with thickend edges in Fig. \ref{f3} to the rightmost leaf of $T_{n-1}$.

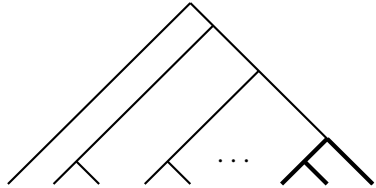
\begin{figure}[htp]
\begin{center}
\begin{tikzpicture}[baseline=-0.65ex, thick, scale=0.3]
\draw (-1, 8) to (7, 0);
\draw (-1, 8) to (-9, 0);
\draw (0, 7) to (-7, 0);
\draw (-5, 0) to (-6, 1);
\draw (2, 5) to (-3, 0);
\draw (-1, 0) to (-2, 1);
\draw [ultra thick] (5, 2) to (3, 0);
\draw [ultra thick] (4, 1) to (5, 0);
\draw [ultra thick] (5, 2) to (7,0);
\draw (1, 1) node{$\cdots$};
\end{tikzpicture}
\caption{The tree $T_n$. \label{f3}}
\end{center}
\end{figure}

Then the tree diagram representing $g_nx_0g_n^{-1}$ 
and the associated Tait graph are given in Fig. \ref{f4}. 
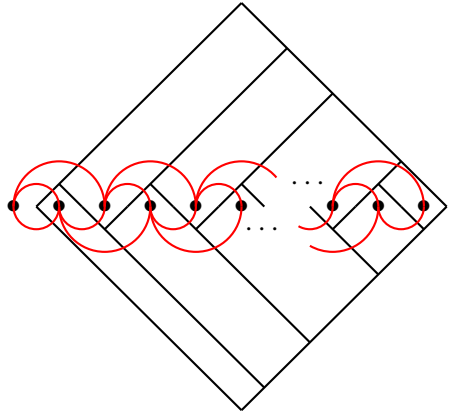
\begin{figure}[htp]
\begin{center}
\begin{tikzpicture}[baseline=-0.65ex, thick, scale=0.3]
\draw (-2, 9) to (7, 0);
\draw (-2, 9) to (-11, 0);
\draw (0, 7) to (-7, 0);
\draw (-5, 0) to (-6, 1);
\draw (2, 5) to (-3, 0);
\draw (-1, 0) to (-2, 1);
\draw (5, 2) to (3, 0);
\draw (4, 1) to (5, 0);
\draw (-9, 0) to (-10, 1);
\draw (1, 1) node{$\cdots$};
\draw (-2, -9) to (7, 0);
\draw (-2, -9) to (-11, 0);
\draw (-1, -8) to (-9, 0);
\draw (-7, 0) to (-8, -1);
\draw (1, -6) to (-5, 0);
\draw (-3, 0) to (-4, -1);
\draw (4, -3) to (1, 0);
\draw (2, -1) to (3, 0);
\draw (6, -1) to (5, 0);
\draw (-1, -1) node{$\cdots$};
\draw (-12, 0) node{$\bullet$};
\draw (-10, 0) node{$\bullet$};
\draw (-8, 0) node{$\bullet$};
\draw (-6, 0) node{$\bullet$};
\draw (-4, 0) node{$\bullet$};
\draw (-2, 0) node{$\bullet$};
\draw (2, 0) node{$\bullet$};
\draw (4, 0) node{$\bullet$};
\draw (6, 0) node{$\bullet$};
\draw [red] (-10,0) arc[start angle=0, end angle=180, radius=1cm];
\draw [red](-8,0) arc[start angle=0, end angle=180, radius=2cm];
\draw [red](-6,0) arc[start angle=0, end angle=180, radius=1cm];
\draw [red](-4,0) arc[start angle=0, end angle=180, radius=2cm];
\draw [red](-2,0) arc[start angle=0, end angle=180, radius=1cm];
\draw [red](-4,0) arc[start angle=-180, end angle=-320, radius=2cm];
\draw [red](4,0) arc[start angle=0, end angle=180, radius=1cm];
\draw  [red] (6,0) arc[start angle=0, end angle=180, radius=2cm];
\draw [red] (-10,0) arc[start angle=0, end angle=-180, radius=1cm];
\draw [red](-8,0) arc[start angle=0, end angle=-180, radius=1cm];
\draw [red](-6,0) arc[start angle=0, end angle=-180, radius=2cm];
\draw [red](-4,0) arc[start angle=0, end angle=-180, radius=1cm];
\draw [red](-2,0) arc[start angle=0, end angle=-180, radius=2cm];
\draw [red](6,0) arc[start angle=0, end angle=-180, radius=1cm];
\draw [red](2,0) arc[start angle=0, end angle=-120, radius=1cm];
\draw [red](4,0) arc[start angle=0, end angle=-120, radius=2cm];
\end{tikzpicture}
\end{center}
\caption{The tree diagram representing $g_nx_0g_n^{-1}$ and the associated Tait graph. \label{f4}}
\end{figure}

The sequence of links corresponding to the Tait graphs in Fig. \ref{f4} is 
the $2$-bridge link $C(1, 1, \cdots, 1)$ in Conway normal form, 
where the number of $1$s is $2n$. 
See Fig. \ref{f51} for the case of $n=3$. 
In this way, we obtain infinitely many different $2$-bridge links constructed 
from elements belonging to the conjugacy class of $x_0$.

\begin{figure}[htp]
\centerline{
\includegraphics[width=4.5in]{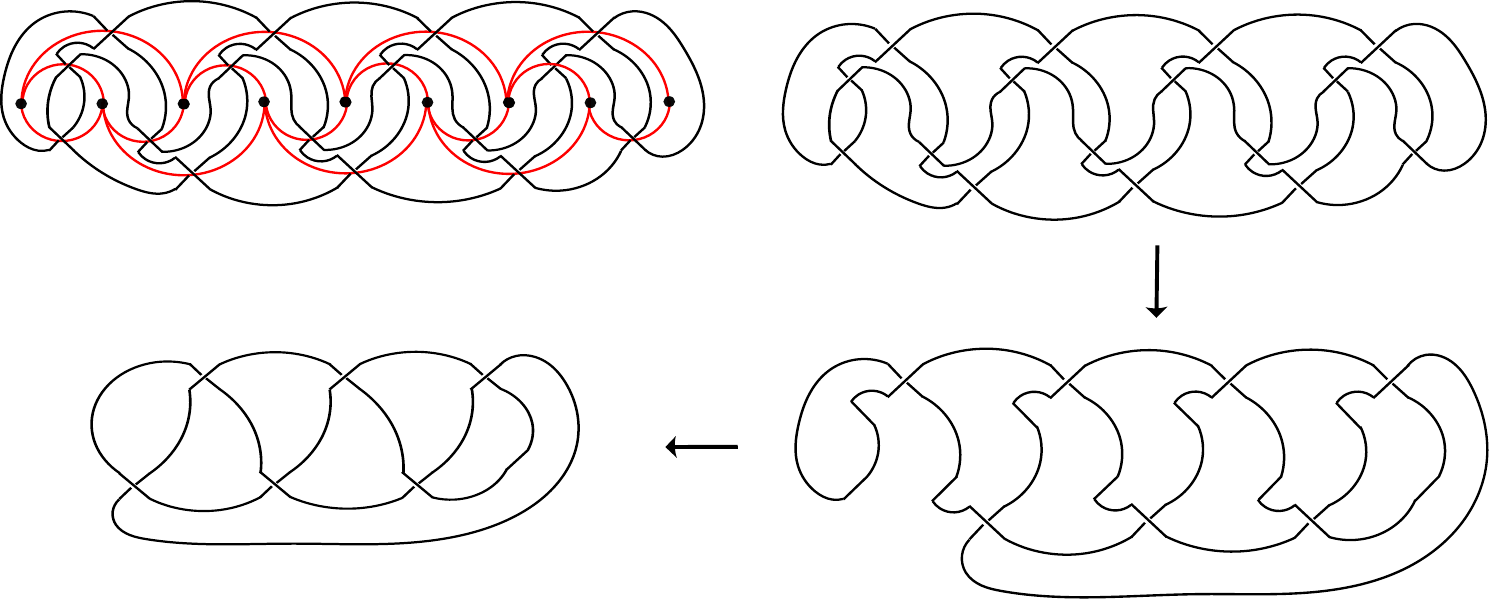}}
\caption{The link constructed from $g_3x_0g_3^{-1}$ is the 2-bridge link $C(1, 1, 1,1,1, 1)$. \label{f51}}
\end{figure}

\subsection{The proof for the case of $x_1$.}
For the generator $x_1$ and a positive element $g=(T_{+}, T_{-})\in F_{+}$, 
we see that the source tree of the tree diagram representing $gx_1g^{-1}$ 
is obtained from $T_{+}$ 
by attaching a caret to its $2$nd leaf from the left, 
while its target tree is obtained from $T_{+}$ by attaching a caret on its rightmost leaf. 
The discussion can be conducted similarly as in the case of $x_0$. 

Consider a positive element $h_n$ whose source tree $S_n$ 
is defined as in Fig. \ref{f5} (left), 
which is obtained from that of $g_n$ 
by attaching the root of $T_n$ to the right leaf of a caret. 
Then the tree diagram $h_n x_1 h_n^{-1}$ is obtained 
from $g_nx_0g_n^{-1}$ by adding a pair of carets, 
as shown in Fig. \ref{f5} (right).

\begin{figure}[htp]
\begin{center}
\begin{tikzpicture}[baseline=-0.65ex, thick, scale=0.3]
\draw (-1, 8) to (7, 0);
\draw (-1, 8) to (-9, 0);
\draw (0, 7) to (-7, 0);
\draw (-5, 0) to (-6, 1);
\draw (2, 5) to (-3, 0);
\draw (-1, 0) to (-2, 1);
\draw (5, 2) to (3, 0);
\draw (4, 1) to (5, 0);
\draw (1, 1) node{$\cdots$};
\draw [ultra thick](-1, 8) to (-2, 9);
\draw [ultra thick](-11, 0) to (-2, 9);
\draw (-3, -2) node{$S_n$};
\end{tikzpicture} \quad \quad
\begin{tikzpicture}[baseline=-0.65ex, thick, scale=0.25]
\draw (-1, 8) to (5, 2);
\draw (-1, 8) to (-7, 2);
\draw [ultra thick](-1, 8) to (-2, 9);
\draw [ultra thick](-9, 2) to (-2, 9);
\draw (-1, -4) to (5, 2);
\draw (-1, -4) to (-7, 2);
\draw [ultra thick](-1, -4) to (-2, -5);
\draw [ultra thick](-9, 2) to (-2, -5);
\draw (-1, 2) node{$g_nx_0g_n^{-1}$};
\end{tikzpicture}
\caption{$S_n$ (left) is obtained from $T_n$ by adding two edges (thick edges). \label{f5}}
\end{center}
\end{figure}
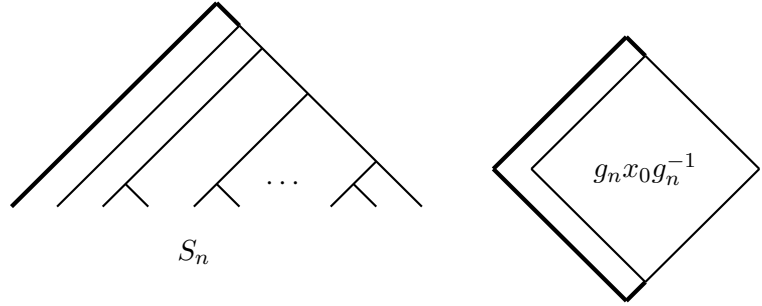

From the construction of Tait graph in Section \ref{sec2.2}, 
we see that the Tait graph associated with the tree diagram of $h_n x_1 h_n^{-1}$ is obtained from 
that of $g_n x_0 g_n^{-1}$ 
by adding a vertex and two edges as in Fig. \ref{f6}. 
The corresponding link is the union of an unknot 
and the $2$-bridge link $C(1, 1, \cdots, 1)$ constructed from $g_n x_0 g_n^{-1}$. 
In Jones's construction, two links are usually identified 
if they only differ by finitely many trivial components. 
As a result, we see that the $2$-bridge link $C(1, 1, \cdots, 1)$ 
can be constructed from elements in the conjugacy class of $x_1$ as well.

\begin{figure}[htp]
\begin{center}
\begin{tikzpicture}[baseline=-0.65ex, thick, scale=0.6]
\draw (-1, 0) node{$\bullet$};
\draw (1, 0) node{$\bullet$};
\draw (3, 0) node{$\bullet$};
\draw (5, 0) node{$\bullet$};
\draw [red] (7,0) node{$\cdots$};
\draw [red, ultra thick] (1,0) arc[start angle=0, end angle=180, radius=1cm];
\draw [red, ultra thick] (1,0) arc[start angle=0, end angle=-180, radius=1cm];
\draw [red] (3,0) arc[start angle=0, end angle=180, radius=1cm];
\draw [red] (3,0) arc[start angle=0, end angle=-180, radius=1cm];
\draw [red] (5,0) arc[start angle=0, end angle=180, radius=2cm];
\draw [red] (5,0) arc[start angle=0, end angle=-180, radius=1cm];
\draw [red] (3,0) arc[start angle=-180, end angle=-30, radius=2cm];
\draw (-0.5, 0.5) to (0.5, 1.5);
\draw (-0.5, -0.5) to (0.5, -1.5);
\draw (0.5, 0.5) to (0.2, 0.8);
\draw (0.5, -0.5) to (0.2, -0.8);
\draw (-0.2, 1.2) to (-0.5, 1.5);
\draw (-0.2, -1.2) to (-0.5, -1.5);
\draw (-0.5, 0.5) to (-0.5, -0.5);
\draw (0.5, 0.5) to (0.5, -0.5);
\draw (-0.5, 1.5) arc[start angle=90, end angle=270, radius=1.5cm];
\end{tikzpicture}
\end{center}
\caption{The Tait graph of $h_n x_1 h_n^{-1}$ is obtained from that of $g_n x_0 g_n^{-1}$ 
by adding one vertex and two edges (thick red). \label{f6}}
\end{figure}
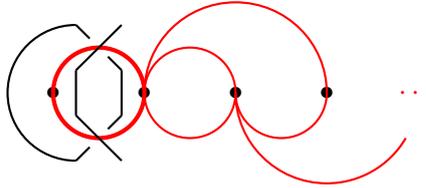

%
\bigskip
\section*{Acknowledgments}
We would like to thank Masaaki Suzuki, Makoto Sakuma, Sebastian Baader, Valeriano Aiello,
 John Parker, Andrew Lobb and Sadayosi Kojima 
for all the valuable comments and suggestions throughout the project. 
The second named author is grateful for all the financial support provided 
by Mathematical and Theoretical Physics Unit, OIST. 
The first named author is partially supported by Grant-in-Aid JP20K14304.
\bigskip
\bibliographystyle{plain}
\bibliography{ref}{}

\end{document}